	\titleformat{\section}[block]{\Large\bfseries\filcenter}{\thesection}{1em}{}
\newcommand{\R}{\mathbb{R}}
\newcommand{\Id}{\operatorname{Id}}
\newcommand{\spt}{\operatorname{spt}}
\newcommand{\WW}{\operatorname{W}}
\newcommand{\LL}{\mathrm{L}}
\newcommand{\X}{\mathbb{X}}
\newcommand{\Y}{\mathbb{Y}}
\def\loc{\operatorname{loc}}
\def\eqn#1$$#2$${\begin{equation}\label#1#2\end{equation}}
\newcommand{\p}{\partial}
\newcommand{\e}{\varepsilon}
\newcommand{\Der}{\nabla}
\newcommand{\Diff}{\mathrm D}
\def\XXint#1#2#3{{\setbox0=\hbox{$#1{#2#3}{\int}$}
     \vcenter{\hbox{$#2#3$}}\kern-.5\wd0}}
\newcommand{\Langle}{\left\langle}
\newcommand{\Rangle}{\right \rangle}
\newcommand{\defeq}{\vcentcolon=}
\newcommand{\eqdef}{= \vcentcolon}
\newcommand{\lb}{\lvert}
\newcommand{\rb}{\rvert}
\newcommand{\Ll}{\mathrm{L}}
\newcommand{\Ww}{\mathrm{W}}
\newcommand{\Bb}{\mathrm{B}}
\theoremstyle{plain}
\newtheorem{thm}{Theorem}
\newtheorem{example}[thm]{Example}
\newtheorem{lemma}[thm]{Lemma}
\newtheorem{corollary}{Corollary}
\newtheorem{theorem}{Theorem}
\newtheorem{proposition}{Proposition}
\newtheorem{remark}{Remark}
\title{A note on improved differentiability for the Banach-space valued Finsler \texorpdfstring{$\gamma$}{}-Laplacian}
\author{Max Goering and Lukas Koch}
\affil{\small MPI for Mathematics in the Sciences, Inselstrasse 22, 04103 Leipzig, Germany \protect \\
{  {\tt{\{goering, lkoch\}@mis.mpg.de}}\vspace{1em}}}
\begin{document}
\maketitle

\begin{abstract}
We obtain improved fractional differentiability of solutions to the Banach-space valued Finsler $\gamma$-Laplacian defined on a $\sigma$-convex, $\tau$-smooth Banach space. The operators we consider are non-linear and very degenerately elliptic. Our results are new already in the $\R$-valued setting.
\end{abstract}

\section{Introduction}
Let $\Omega\subset \R^n$ be an open, bounded domain and let $(\X,\rho)$ be a $\sigma$-convex, $\tau$-smooth Banach space. Consider $u\colon \Omega \to \X$ and let $F$ be a norm on $\X\otimes \R^n$. We study solutions of the problem
\begin{equation} \label{e:fullgen}
\int_{\Omega} \Langle F(\Der u)^{\gamma-1} \Diff F(\Der u), \Der  \varphi \Rangle_{\X \otimes \R^{n}} = \int_{\Omega} \langle f, \varphi \rangle_{\X} \qquad \forall \varphi \in \Ww^{1,\gamma}_{0}(\Omega;\X),\tag{F$\gamma$L}
\end{equation}
for sufficiently regular $f$. Here $\Diff F$ denotes the derivative of the map $F\colon \X\otimes \R^n\to \R$. We refer to \eqref{e:fullgen} as the Banach-space valued Finsler $\gamma$-Laplacian. We refer the reader to Section \ref{s:background} for precise definitions and notation.

Given a norm $\rho$ on $\R^{n}$ the (scalar) Finsler Laplacian 
\begin{equation} \label{e:flap}
\int_{\Omega} \Langle \rho(\Der u) \Diff \rho(\Der u), \Der  \varphi \Rangle = 0 \qquad \forall \varphi \in \Ww^{1,2}_{0}(\Omega;\R)
\end{equation}
is closely related to the anisotropic isoperimetric inequality, 
\cite{belloni2003isoperimetric}. Under additional 2nd order ellipticity conditions, the Finsler Laplacian has also been used to study bubbling phenomena for volume constrained minimizers \cite{delgadino2018bubbling}. Some qualitative regularity for the Finsler Laplacian was first shown in \cite{ferone2009remarks}, where no quantitative regularity or convexity of $\rho$ was assumed.

Recently, extending results of \cite{belloni2003isoperimetric}, the Finsler $\gamma$-Laplacian 
\begin{equation} \label{e:gflap}
\int \Langle \rho(\Der u)^{\gamma-1} \Diff \rho(\Der u), \Der  \varphi \Rangle = 0 \qquad \forall \varphi \in \Ww^{1,\gamma}_{0}(\Omega;\R)
\end{equation}
has been studied in \cite{kawohl2008p,xia2019sharp, di2020eigenvalues,kristaly2022nonlinear} with a focus on spectral properties, due to applications to the isoperimetric inequality and Finslerian analogs of other classical inequalities in Riemannian geometric analysis. Concerning the regularity theory of solutions to \eqref{e:gflap}, in the scalar Euclidean set-up standard theory applies, see \cite[Chapter 7]{giusti2003}, and in particular H\"older regularity of solutions can be recovered via Harnack's inequality and maximum principles, see also \cite{ferone2009remarks} and \cite{goering2020harnack} for further comments in this direction. 

In order to relate well-studied (quantified) geometric properties of Banach spaces to regularity of solutions of the Finsler $\gamma$-Laplacian, we make use of duality methods and notation taken from Banach space geometry, see \cite{xu1991characteristic}. In particular, given a $\sigma$-convex and $\tau$-smooth Banach space $(\Y, F)$, we make use of the duality mappings $j_{F}^{\gamma}(\xi) \defeq F(\xi)^{\gamma-1} \mathrm{D} F(\xi)$. Here we only mention that $j_{F}^{\gamma} (\nabla u)$ coincides with the stress functional, which is of recent interest in the calculus of variations. For the readers convenience, we include background for these tools in Section \ref{s:BanachGeometry}. 

Duality techniques in the context of the calculus of variations already appear in \cite{Zhikov1994} and were used in the context of integrands with linear growth in \cite{Seregin1993}. More recently, such ideas have been employed in the context of linear growth \cite{Koch2022}, standard growth \cite{Carozza2005,Carozza2006,Carozza2013}, faster than exponential growth \cite{Bonfanti2012,Bonfanti2013}, and $(p,q)$-growth \cite{Koch2022a}. To facilitate a clear analysis of how our results relate to the literature on quasi-linear elliptic PDEs, we state our main result precisely. 
\begin{theorem} \label{t:mainintro}
Fix $1 < \gamma < \infty, \tau \in (1,2]$, and $\sigma \in [2,\infty)$. Let $(\X, \rho)$ be a Banach space, $\Omega \subset \R^{n}$ a domain, and $F$ a $\sigma$-convex and $\tau$-smooth norm on $\X \otimes \R^{n}$ satisfying for some norm $\| \cdot \|$ on $\R^{n}$
$$
F(\xi_{1}, \dots, \xi_{n}) \lesssim \|(\rho(\xi_{1}), \dots \rho(\xi_{n}) \|.
$$
Set $\alpha = \max ( \min (\gamma^{\prime}, \sigma^{\prime} ) , \min ( \gamma, \tau))$, and $\alpha_{\ast} = \alpha -1$. Assume $f\in \Bb^{\alpha_\ast,\gamma^\prime}_{\infty}(\Omega)$. Suppose $u \in \Ww^{1,\gamma}(\Omega;\X)$ solves \eqref{e:fullgen}. If 
$$
V(\Der u) \defeq F(\nabla u)^{\frac{\gamma-\sigma}{\sigma}} \nabla u, \quad\text{and}\quad V^{\ast}(j_F^\gamma(\Der u)) \defeq F_\ast( j_{F}^{\gamma}(\nabla u))^{\frac{\gamma^{\prime}-\tau^{\prime}}{\tau^{\prime}}} j_{F}^{\gamma}(\nabla u)
$$
respectively, denote the $V$- and dual $V$-functions for $\nabla u$, then
\begin{equation} \label{e:vfuncIntro}
V(\Der u) \in \Bb^{\frac{\alpha}{\sigma},\sigma}_{\infty,\loc}(\Omega) \quad \text{and} \quad V^{\ast}(j_F^\gamma(\Der u)) \in \Bb^{\frac{\alpha}{\tau^{\prime}}, \tau^{\prime}}_{\infty,\loc}(\Omega).
\end{equation}
In fact, for any $B_{r} \Subset B_{s} \Subset \Omega$,
$$
\|V(\Der u)\|^{\sigma}_{\Bb^{\frac{\alpha}{\sigma},\sigma}_{\infty}(B_{r})}+\|V_{\ast}(j_F^\gamma(\Der u)\|^{\tau^{\prime}}_{\Bb^{\frac{\alpha}{\tau^{\prime}},\tau^{\prime}}_{\infty}(B_{r})} \lesssim_{(s-r)} \|u\|_{\Ww^{1,\gamma}(B_{s})}^\gamma + \|f\|_{\Bb_{\infty}^{\alpha_{\ast},\gamma^\prime}(B_{s})}^{\gamma^\prime} .
$$
Moreover, 
\begin{equation} \label{e:Intro}
\nabla u \in \Bb_{\infty,\loc}^{\frac{\alpha}{\max\{\sigma,\gamma\}},\gamma}(\Omega) \quad \text{and} \quad j_{F}^{\gamma}(\nabla u) \in \Bb_{\infty,\loc}^{\frac{\alpha}{\max\{\tau^{\prime}, \gamma^{\prime}\}},\gamma^{\prime}}(\Omega).
\end{equation}
\end{theorem}
A wealth of examples of functionals $F$ satisfying the assumptions of Theorem \ref{t:mainintro} can be constructed as follows.
\begin{example}\label{ex:BanachSpaces}
When $(\X, \rho)$ is a $\sigma$-convex and $\tau$-smooth Banach space, we can identify $\X \otimes \R^{n}$ with the space of ''matrices'' with $n$ columns, where each column is an elements in $\X$ after some choice of basis. This is analogous to how $\R^{m} \otimes \R^{n}$ is often identified with the space of $m \times n$ matrices. Then, define $F : \X \otimes \R^{n} \to \R$  by setting for $1<p<\infty$,
\begin{equation} \label{e:defF}
F(x_{1}, \dots, x_{n}) = \| (\rho(x_{1}), \dots, \rho(x_{n}) ) \|_{\ell^{p}(\R^{n})} \qquad \forall x_{i} \in \X
\end{equation}
Then $(\X\otimes \R^n,F)$ is $\min(p,\tau)$-smooth and $\max(p,\sigma)$-convex. We prove this fact in Proposition \ref{p:norms} and give examples of $\sigma$-convex and $\tau$-smooth Banach spaces in Example \ref{ex:spaces}.
\end{example}

The $V$- and dual $V$-functions, denoted by $V$, $V^{\ast}$ in Theorem \ref{t:mainintro} are specific choices from the family of $V$ functions associated to a Banach space $(\Y,F)$ and its dual $(\Y^{\ast}, F_{\ast})$, defined by
\begin{equation} \label{e:vfuncdef}
V_{p,q}(\xi) = F(\xi)^{\frac{p-q}{q}} \xi \quad \text{and} \quad V_{p,q}^{\ast}(\xi^{\ast}) = F_{\ast}(\xi^{\ast})^{\frac{p-q}{q}} \xi^{\ast}.
\end{equation}

We choose to study \eqref{e:fullgen} in a general $\sigma$-smooth and $\tau$-convex Banach space $(\X,\rho)$. This poses essentially no additional difficulty compared to working over $\R^m$, but enables the application of our theorem to interesting Banach spaces. The main additional difficulty is to reprove a technical estimate concerning the $V$-functional in this setting. Since this result may be of independent interest in the study of Banach-space valued problems in the calculus of variations, we state it here.
\begin{lemma}\label{lem:VFunc}
Let $(\Y, F)$ be a Banach space. Then, for any $p, q >0$,

\begin{equation*}
F\left( V_{p,q}(\xi) - V_{p,q}(\eta) \right) \sim \left( F( \xi) + F(\eta) \right)^{\frac{p-q}{q}} F(\xi - \eta).
\end{equation*}
The implicit constants depend only on $\frac{p-q}{q}$.
\end{lemma}
While Lemma \ref{lem:VFunc} is well known when $F$ is the Euclidean norm on $\R^n$, our proof is elementary and offers insight even in this case.

Our method of proving Theorem \ref{t:mainintro} follows difference quotient arguments well-known in the study of solutions to elliptic systems with $p$-growth or even so-called non-standard growth, see \cite{giusti2003} for an introduction. The main difficulty is the lack of ellipticity inherent in the Finsler $\gamma$-Laplacian. The Euclidean-norm is $2$-smooth and $2$-convex. Weakening these assumptions to $\tau$-smoothness and $\sigma$-convexity fundamentally changes the ellipticity. In particular very degenerate elliptic behavior is possible (see Example \ref{r:ellipticity}). This also changes the precise form of growth bounds of the system \eqref{e:fullgen}. $\sigma$-convexity and $\tau$-smoothness have been studied since at least \cite{clarkson1936uniformly}. Nonetheless, a direct relationship between the regularity of solutions to any Finsler $\gamma$-Laplacian and the basic quantified geometric properties of the Finslerian norm have not previously been exploited. By using the so-called characteristic inequalities of \cite{xu1991characteristic}, we are able to transfer difference quotient techniques to this setting.

We emphasize that we deal with a broad class of problems and do not impose structure conditions beyond $\sigma$-convexity and $\tau$-smoothness, and a natural condition that ensures the norm $F$ on the space of gradients $\X \otimes \R^{n}$ behaves well relative to the norm $\rho$ on the space of partial derivatives $\X$. Our results also suggest new critical thresholds concerning the regularity of solutions. These thresholds depend on the relationship between the smoothness and convexity of $F$ as well as the homogeneity $\gamma$ of the equation. We give two examples to demonstrate the type of problems we are able to deal with and to contrast our results with the existing literature.

\begin{example} \label{r:ellipticity}

The elliptic behavior of \eqref{e:fullgen} can be badly behaved, even in relatively simple cases, when $\X = \R$ and $\gamma = 2$. Consider $\rho = \| \cdot \|_{\ell^{p}}$ for some $p > 2$. Then, the Finsler Laplacian takes the strong form,
$$
- \textrm{div} \left( \Diff|_{\nabla u(x)} \| \cdot\|_{\ell^p}^{2} \right) = - \textrm{div} \left( A(x) \Der u(x) \right) = 0,
$$
where 
$$
(A(x))^{i}_{j} = \| \Der u(x)\|_{\ell^p}^{2-p} \lvert\partial_{i}u(x)\rvert^{p-2} \delta^{i}_{j}.
$$

Thus when $\Der u(x_{0}) = \lambda e_{i}$ for some standard basis direction, it follows $A(x_{0})$ has precisely $1$-nonzero entry, no matter the size of $\lvert\lambda\rvert = \lvert \Der u(x_{0})\rvert$.  Consequently, \eqref{e:gflap} falls outside the realm of PDEs studied in \cite{colombo2017regularity} and \cite{brasco2010congested}, where elliptic operators with degeneracies in a convex set were studied.
\end{example}

\begin{example} \label{x:rotations}
The orthotropic $p$-Laplacian, arises as the Finsler $p$-Laplacian for the norm $\rho(\cdot)= \| \cdot \|_{\ell^p}$ and takes the weak form
$$
\sum_{i} \int \lvert\partial_{i} u\rvert^{p-2} \partial_{i} u \partial_{i} \varphi = 0. 
$$

This equation has been studied extensively, see \cite{demengel2016lipschitz}, with substantial work performed on generalizations focused on preserving the orthotropic structure, see for instance \cite{brasco2017sobolev}. One of the key techniques to study local regularity of energy minimizing surfaces is to produce graphical approximations, and verify that the approximations almost solve some PDE. When $\| \cdot \|_{p}$-minimizing surfaces are flat with respect to a standard basis direction, see \cite{goering2020harnack} for a more precise discussion, the key PDE to study is the orthotropic $p$-Laplacian. When the surface is flat with respect to other planes, it is necessary to study rotations of the orthotropic $p$-Laplacian, i.e., the Finsler $p$-Laplacian with norm $\rho ( \cdot ) = \| M \cdot \|_{\ell^p}$ for some rotation matrix $M$. This PDE takes the weak form
$$
\sum_{i} \int \lvert (M \Der u)^{i}\rvert^{p-2} (M \Der u)^{i} (M \Der \varphi )^{i} = 0.
$$
Due to the non-linear mixing of partial derivatives, one cannot hope to adapt techniques depending upon the orthotropic structure of the PDE to this setting. Hence improved differentiability for rotations of the orthotropic $p$-Laplacian is an open question answered by Theorem \ref{t:mainintro}. This answer is made explicit in Corollary \ref{t:classical} and the discussion following it.
\end{example}

To aid in comparing Theorem \ref{t:mainintro} to the existing literature, we state a simplified version in the case that $F$ is either $2$-smooth or $2$-convex and simplify to finite-dimensional systems, i.e., the case that $\X = \R^{m}$. 

\begin{corollary}\label{t:classical}
Suppose $u \in \Ww^{1,\gamma}(\Omega, \R^{m})$ solves \eqref{e:fullgen}.
If $F$ is $2$-smooth 
and $\sigma$-convex, then whenever $\gamma \ge 2$,
\begin{equation} \label{e:classicaldual}
V^{\ast}(j_F^\gamma(\Der u)) \in \Ww^{1,2}_{\loc}(\Omega) \quad \text{and} \quad j_{F}^{\gamma}(\nabla u) \in \Ww^{1,\gamma^{\prime}}_{\loc}(\Omega).
\end{equation}
If $F$ is $\tau$-smooth, and $2$-convex, then whenever $\gamma \le 2$, 
\begin{equation} \label{e:classicalprimal}
V(\Der u) \in \Ww^{1,2}_{\loc}(\Omega) \quad \text{and} \quad \nabla u \in \Ww_{\loc}^{1,\gamma}(\Omega).
\end{equation}
\end{corollary}

The dual statement in Corollary \ref{t:classical}, that is \eqref{e:classicaldual}, follows from Theorem \ref{t:mainintro} when $\alpha = \min (\gamma, \tau) = 2$. The primal statement of Corollary \ref{t:classical}, i.e. \eqref{e:classicalprimal}, follows from Theorem \ref{t:mainintro} when $\alpha = \min (\sigma^{\prime}, \gamma^{\prime}) = 2$. 

Corollary \ref{t:classical} recovers a number of classical results. To recover standard results for the $p$-Laplacian from Corollary \ref{t:classical}, note when $F = \lvert \cdot \rvert$ is the Euclidean norm, it is $2$-smooth, $2$-convex, and $\nabla F(z) = \frac{ z }{F(z)}$. So, in this setting,
$$
V(\Der u) = V^{\ast}(j_F^\gamma(\Der u)) = \lvert\nabla u(x)\rvert^{\frac{p-2}{2}} \nabla u(x).
$$
Therefore, the regularity of $V(\Der u)$ when $p \ge 2$ and of $V^{\ast}(j_F^\gamma(\Der u))$ when $p \le 2$ recovers the classical theorem due to Uhlenbeck, that $\lvert\nabla u\rvert^{\frac{p-2}{2}} \nabla u \in W^{1,2}_{\loc}$ when $u$ is $p$-harmonic, proven with difference quotient methods in Euclidean space in \cite{bojarski1987p}.

We can also recover results for the orthotropic $p$-Laplacian. Recall the orthotropic $p$-Laplacian is the Finsler $\gamma$-Laplacian when $F = \| \cdot \|_{p}$ and $\gamma = p$. When $p \ge 2$, $\| \cdot \|_{p}$ is $2$-smooth and $p$-convex.  In this case,
$$
V^\ast(\Der u) = F(j_{F}^{p}(\nabla u))^{\frac{p^{\prime}-2}{2}} j_{F}^{p}( \nabla u) = \|\nabla u \|_{\ell^{p}}^{\frac{2-p}{2}}\sum_{i} \lvert\partial_{i} u\rvert^{p-2} \partial_{i} u e_{i}.
$$
Then, Corollary \ref{t:classical} says that $V^\ast(j_F^\gamma(\Der u)) \in \Ww^{1,2}_{\loc}(\Omega)$  when $p \ge 2$. In \cite[Theorem 1.1.]{brasco2017sobolev} a very similar theorem is proven for generalizations of the orthotropic $p$-Laplacian, which arise by focusing on the anisotropic structure. In fact, for the orthotropic $p$-Laplacian \cite[Theorem 1.1.]{brasco2017sobolev} states that
$$
\mathcal{V}^{\ast}(\nabla u) = \sum_{i=1}^{n} \lvert\partial_{i} u\rvert^{\frac{2-p}{2}} l\vert\partial_{i} u\rvert^{p-2} \partial_{i} u e_{i} \in \Ww^{1,2}_{\loc}(\Omega),
$$
assuming additionally that $u$ is bounded.

When $p \le 2$, $\| \cdot \|_{\ell^{p}}$ is $p^{\prime}$-smooth and $2$-convex. So, when $u$ is orthotropic $p$-harmonic, \eqref{e:classicalprimal} of Corollary \ref{t:classical} states
$$
V(\Der u) = F(\nabla u)^{\frac{2-p^{\prime}}{p^{\prime}}} \nabla u = \|\nabla u \|_{\ell^p}^{\frac{p-2}{2}} \nabla u \in W^{1,2}_{\loc}(\Omega) \quad\text{and}\quad u \in W^{2,p}_{\loc}(\Omega).
$$
We believe experts in the field may not be surprised by this result, and might already know it. But, we have not been able to find an explicit reference in the literature.

Finally, we remark that for any $1 < p < \infty$, Corollary \ref{t:classical} also applies to rotations of the orthotropic $p$-Laplacian, see Example \ref{x:rotations}, which cannot be addressed in the existing literature.

So far we have highlighted the relation of our results to classical regularity statements concerning $\Der u$, $V(\nabla u)$, and $V^{\ast}(j_F^\gamma(\nabla u))$. Recently improved regularity for the stress, which herein is denoted by $j_F^\gamma(\Der u)$, has gained increasing attention. We do not aim to give an overview here, but highlight some related results to which we refer for further references. In \cite{cianchi2018} double-sided global estimates relating $\|\Diff F(\Der u)\|_{\WW^{1,2}(\Omega)}$ and $\|f\|_{\LL^2(\Omega)}$ in the case of elliptic equations with Uhlenbeck structure have been shown. \cite{avelin2018} provides a non-linear Cald\'{e}ron-Zygmund theory in the setting of elliptic equations of $p$-growth with measure-valued right-hand side. The technique heavily relies on stress regularity. Moreover, regularity transfer (in terms of differentiability) from the right-hand side to the stress for the $p$-Laplace operator has been studied in \cite{balci2020}. We note that with regards to the regularity of the right-hand side, our results are weaker than those obtained in the classical setting. It would be interesting to see whether our results can be improved  in order to capture (sharp) assumptions on the right-hand side. Finally, we remark that in \cite{guarnotta2022} the question of what structure conditions on $F$ are necessary in order to obtain $\WW^{1,2}_{loc}$-regularity of the stress  for minima of the functional $\int_\Omega F(\Der u)$ are studied. The key assumption was found to be quasi-conformality of the map $z\to \Diff F(z)$. It would be interesting to study whether such a result extends to our setting.

In Section \ref{s:background}, we provide some background results on Banach space geometry and then prove Lemma \ref{lem:VFunc} in Section \ref{s:VFunc}. We finally prove our main Theorem in Section \ref{s:proof}.

\section{Preliminaries} \label{s:background}
Throughout, given any number $p \in (1,\infty)$, we let $p^{\prime}$ denote its Hölder conjugate. For a given function $u$ on $\Omega$ and $h\in \X$, denote $\Delta_h u(x)=u(x)-u(x+h)$. If $x\not\in\Omega$ or $x+h \not\in \Omega$, we understand $\Delta_{h} u(x) \equiv 0$. We denote by $\lvert \cdot\rvert$ the usual $L^2$-based norm on $\R^n$.

We refer to \cite{Hytonen2016} or \cite{Kreuter2015} for the theory of vector-valued Sobolev and Besov spaces. For our purposes we only say that if $\Omega \subset \R^{n}$ and $\Y$ is a reflexive Banach space and $\alpha\in(0,1)$, the Besov space $\Bb_{\infty}^{\alpha,\gamma}(\Omega;\Y)$ is characterized by difference quotients, i.e.,
\begin{equation} \label{e:dqchar}
u \in \Bb^{\alpha,\gamma}_{\infty}(\Omega;\Y) \iff \exists C > 0 \text{ so that } \| \Delta_{h} u \|_{\LL^{p}(\omega)} \le C |h|^{\alpha} \quad \forall \omega \Subset \Omega.
\end{equation}

Further, $u \in \Bb^{\alpha,\gamma}_{\infty,loc}(\Omega;\Y)$ if $C$ is allowed to depend on $\omega$. If $\alpha=1$, the same definition gives the usual Sobolev space $\WW^{1,p}(\Omega)$ and its local variant $\WW^{1,p}_{loc}(\Omega)$.

\subsection{Banach space geometry}\label{s:BanachGeometry}
Let $(\X, \rho)$ be a real Banach space and $(\X^{\ast}, \rho_{\ast})$ denote its dual. The modulus of convexity
$$
\sigma_{\rho}(\epsilon) = \inf \left\{ 1 - \rho \left( \frac{x+y}{2} \right) ~ : ~ x,y \in \{ \rho = 1\}, \rho(x-y) \ge \epsilon \right\}
$$
and the modulus of smoothness
$$
\tau_{\rho}(\epsilon) = \frac{1}{2} \sup \left\{ \rho(x+y) + \rho(x-y) - 2 ~ : ~ x \in \{ \rho = 1 \}, \rho(y) \le \epsilon \right\}
$$
are fundamental in understanding the geometry of $\X$. 

For $\tau \in (1,2]$ and $\sigma \in [2,\infty)$, we define a Banach space $(\X,\rho)$ to be $\sigma$- convex if
$$
\sigma_{\rho}(\epsilon) \gtrsim \epsilon^{\sigma} \qquad \forall \epsilon \in [0,2]
$$
and $(\X,\rho)$ is called $\tau$-smooth if
$$
\tau_{\rho}(\epsilon) \lesssim \epsilon^{\tau} \qquad \forall \epsilon \in [0, \infty).
$$

For any $\gamma > 1$, the duality mapping $j_{\rho}^{\gamma} : \X \to \X^{\ast}$ defined by
$$
j_{\rho}^{\gamma}(x) \defeq \left\{ x^{\ast} : \rho(x^{\ast}) = \rho(x)^{\gamma-1} \text{ and } \langle x, x^{\ast} \rangle = \rho(x)^{\gamma} \right \}
$$ 
coincides with the single-valued function 
\begin{equation} \label{e:dualmap}
j_{\rho}^{\gamma}(x) = \rho(x)^{\gamma-1} \nabla \rho (x),
\end{equation}
whenever $\X$ is $\tau$-smooth and $\sigma$-convex for some $\tau \in (1,2]$ and some $\sigma \in [2, \infty)$. See for example \cite[Properties (J2,J3)]{xu1991characteristic}. Moreover a Banach space is reflexive if and only if it is uniformly smooth or uniformly convex \cite[Proposition 1.e.3]{lindenstrauss2013classical}. In particular, if $(\Y, F)$ is $\tau$-smooth and $\sigma$-convex, it is reflexive and \eqref{e:dqchar} holds. We recall,

\begin{proposition} \label{p:smoothconvexduality}
A Banach space $(\X, \rho)$ is $\sigma$-convex and $\tau$-smooth if and only if its dual $(\X^{\ast}, \rho_{\ast})$ is $\tau^{\prime}$-convex and $\sigma^{\prime}$-smooth.
\end{proposition}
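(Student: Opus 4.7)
The plan is to invoke Lindenstrauss's duality formula relating the modulus of convexity of $\X$ to the modulus of smoothness of $\X^{*}$, namely
\begin{equation*}
\tau_{\rho_{*}}(t) \;=\; \sup_{0 < \epsilon \le 2}\left\{\frac{\epsilon t}{2}-\sigma_{\rho}(\epsilon)\right\},\qquad t\ge 0,
\end{equation*}
together with the symmetric formula obtained by interchanging $\rho$ and $\rho_{*}$ (see e.g.\ \cite[Propositions 1.e.2 and 1.e.6]{lindenstrauss2013classical}). Under the standing assumptions $\sigma \in [2,\infty)$ and $\tau \in (1,2]$, the hypothesized uniform convexity and uniform smoothness each already force $\X$ to be reflexive, so $(\X^{*})^{*} = \X$ isometrically and the Lindenstrauss formula may be applied on either side.

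First I would prove the forward implication. Assuming $\sigma_{\rho}(\epsilon) \ge c \epsilon^{\sigma}$ on $[0,2]$, substituting into the displayed identity gives
\begin{equation*}
\tau_{\rho_{*}}(t) \;\le\; \sup_{\epsilon>0}\left(\frac{\epsilon t}{2} - c \epsilon^{\sigma}\right),
\end{equation*}
and elementary optimisation in $\epsilon$ produces a critical point $\epsilon_{*}\asymp t^{1/(\sigma-1)}$, whence $\tau_{\rho_{*}}(t) \lesssim t^{\sigma/(\sigma-1)} = t^{\sigma^{\prime}}$, i.e.\ $\rho_{*}$ is $\sigma^{\prime}$-smooth. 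An analogous computation, starting from $\tau_{\rho}(t) \le C t^{\tau}$ and using the formula with $\rho$ and $\rho_{*}$ interchanged, should deliver $\sigma_{\rho_{*}}(\epsilon) \gtrsim \epsilon^{\tau^{\prime}}$, i.e.\ $\tau^{\prime}$-convexity of $\rho_{*}$.

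For the reverse implication I would simply apply the forward implication, once established, to the Banach space $(\X^{*}, \rho_{*})$ and use the isometric identification $(\X^{*})^{*} = \X$ together with the elementary identities $(\sigma^{\prime})^{\prime} = \sigma$ and $(\tau^{\prime})^{\prime} = \tau$ to recover the required $\sigma$-convexity and $\tau$-smoothness of $\rho$.

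The main obstacle I anticipate is the step that extracts a lower bound on $\sigma_{\rho_{*}}$ from an upper bound on $\tau_{\rho}$, since Legendre-type duality inverts inequalities cleanly only on convex functions, whereas $\sigma_{\rho_{*}}$ is not a priori convex in its argument. To circumvent this I would replace $\sigma_{\rho_{*}}$ by its largest convex minorant, which differs from $\sigma_{\rho_{*}}$ only by universal multiplicative constants at power-type scales, so that the exponent $\tau^{\prime}$ is preserved; alternatively one can bypass the issue entirely by invoking Ball--type characterisations of $q$-convexity in terms of pointwise lower bounds on $\rho(x+y)^{q}+\rho(x-y)^{q}-2\rho(x)^{q}$, which admit a direct duality with the corresponding smoothness inequalities on $\X^{*}$.
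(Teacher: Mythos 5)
The paper offers no proof of this proposition---it is stated with the phrase ``We recall'' and treated as standard background---so there is nothing to compare against directly. Your argument is the classical textbook route (Lindenstrauss duality formula plus Young-type optimisation), and it is essentially correct.

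One point worth flagging: the obstacle you anticipate in the final paragraph is illusory. To pass from $\tau_{\rho}(t)\le Ct^{\tau}$ to $\sigma_{\rho_{*}}(\epsilon)\gtrsim\epsilon^{\tau'}$ you do not need $\sigma_{\rho_{*}}$ to be convex, because you never invoke a biconjugation theorem. From the Lindenstrauss equality $\tau_{\rho}(t)=\sup_{0<\epsilon\le 2}\{\tfrac{\epsilon t}{2}-\sigma_{\rho_{*}}(\epsilon)\}$ (valid since $\X$ is reflexive, so $\X=(\X^{*})^{*}$), it follows for every fixed $\epsilon$ and every $t\ge 0$ that
\begin{equation*}
\sigma_{\rho_{*}}(\epsilon)\;\ge\;\frac{\epsilon t}{2}-\tau_{\rho}(t)\;\ge\;\frac{\epsilon t}{2}-Ct^{\tau},
\end{equation*}
and optimising the right-hand side over $t$ at $t_{*}\asymp\epsilon^{1/(\tau-1)}$ gives the desired lower bound $\sigma_{\rho_{*}}(\epsilon)\gtrsim\epsilon^{\tau'}$. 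This inequality-chasing is valid for arbitrary functions; the convexity of $\sigma_{\rho_{*}}$ (or the use of its convex minorant, or the Ball-type pointwise reformulation you mention) would only become relevant if you needed the reverse implication $\sigma_{\rho_{*}}=(\tau_{\rho})^{*}$, which you do not. You might also add a sentence verifying that the critical point in each optimisation stays within the admissible range $(0,2]$ for small $t$ (respectively small $\epsilon$); this is the only regime relevant for extracting the power-type exponent, so the restriction is harmless. With those remarks the proof is complete and matches the standard argument one would expect the authors to have in mind.
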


It is also well-known that if $\tau \in (1,2]$ and $\sigma \in [2,\infty)$ then 
\begin{equation} \label{e:ident}
j_{\rho}^{\gamma} \circ j_{\rho_{\ast}}^{\gamma^{\prime}} = \Id_{\X_{\ast}} \quad \text{and} \quad j_{\rho_{\ast}}^{\gamma^{\prime}} \circ j_{\rho}^{\gamma} = \Id_{\X},
\end{equation}
see for instance \cite{bargetz2020rate}.

\begin{example}\label{ex:spaces} We note the following examples of $\sigma$-smooth and $\tau$-convex spaces.
\begin{enumerate}
\item $\|\cdot\|_{\ell^p}$ is $\max(p,2)$-convex and $\min(p,2)$-smooth for $p\in(1,\infty)$.
\item $\|\cdot\|_{\Ll^p}$ is  $\max(p,2)$-convex and $\min(p,2)$-smooth for $p\in(1,\infty)$.
\item Note that for $p\in(1,\infty)$, $\Ww^{m,p}(\Omega,\Y^n)$, equipped with the usual $L^1$-based norm is not $\tau$-smooth or $\sigma$-convex. However, due to Proposition \ref{p:norms}, if endowed with the equivalent norm
\begin{align*}
\|u\|_{\Ww^{m,p}(\Omega,\Y)} = \left(\sum_{\lvert\alpha\rvert\leq m} \|\nabla^\alpha u\|_{\Ll^p(\Omega,\Y)}^2\right)^\frac 1 2
\end{align*}
it is $\max(p,2)$-convex and $\min(p,2)$-smooth. So, one must be careful since equivalent norms do not preserve smoothness and convexity of Banach spaces.
\end{enumerate}
\end{example}

We now recall the main results of \cite{xu1991characteristic} which are relevant to this article, restated
in the special case of $\sigma$-convex and $\tau$-smooth Banach spaces. 
\begin{theorem} \label{t:xuroach}
Let $(\X,\rho)$ be a Banach space. The following are equivalent:
\begin{enumerate}
\item $\rho$ is $\sigma$-convex 
\item For all $\gamma > 1$ and every $x,y \in \X$
\begin{equation} \label{e:sigmaconvex}
\langle j_{\rho}^{\gamma}(x) - j_{\rho}^{\gamma}(y) , x-y \rangle \gtrsim_{\gamma} ( \rho(x) + \rho(y) )^{\gamma-\sigma} \rho(x-y)^{\sigma}.
\end{equation}
\item There exists a $\gamma > 1$ so that \eqref{e:sigmaconvex} holds.
\end{enumerate}

Alternatively, the following are equivalent:
\begin{enumerate}
\item[(a)] $\rho$ is $\tau$-smooth 
\item[(b)] For all $\gamma > 1$ and every $x,y \in \X$
\begin{equation} \label{e:tausmooth}
\rho_{\ast} \left( j_{\rho}^{\gamma}(x) - j_{\rho}^{\gamma}(y) \right) \lesssim_{\gamma} \left( \rho(x) + \rho(y) \right)^{\gamma-\tau} \rho(x-y)^{\tau-1} .
\end{equation}
\item[(c)] There exists a $\gamma > 1$ so that \eqref{e:tausmooth} holds.
\end{enumerate}
\end{theorem}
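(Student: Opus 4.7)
The plan is to establish the two sets of equivalences simultaneously by passing to the dual. Using Proposition \ref{p:smoothconvexduality} together with the identity \eqref{e:ident}, the $\tau$-smoothness inequality (b) for $\rho$ should be equivalent to the (convexity) inequality (2) applied to $(\X^{\ast},\rho_{\ast})$ with $\gamma$ replaced by $\gamma^{\prime}$ and $\sigma$ by $\tau^{\prime}$. Concretely, I would set $u = j_{\rho}^{\gamma}(x)$, $v = j_{\rho}^{\gamma}(y)$ in (2) for $\rho_{\ast}$, use $j_{\rho_{\ast}}^{\gamma^{\prime}}(u) = x$ and $\rho_{\ast}(u) = \rho(x)^{\gamma-1}$, and then apply Hölder's inequality $\langle u-v, x-y\rangle \leq \rho_{\ast}(u-v)\rho(x-y)$ to convert the duality-pairing lower bound for $\rho_{\ast}$ into an upper bound for $\rho_{\ast}(u-v)$, which is exactly (b); the reverse substitution recovers (2) for $\rho_{\ast}$. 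It therefore suffices to prove the chain (1) $\iff$ (2) $\iff$ (3).

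For (2) $\iff$ (3), the implication (2) $\Rightarrow$ (3) is immediate. For the converse, I would exploit that the normalized duality map $x \mapsto j_{\rho}^{\gamma}(x)/\rho(x)^{\gamma-1}$ is independent of $\gamma$. Hence $j_{\rho}^{\gamma}(x) = \rho(x)^{\gamma-\gamma_{0}} j_{\rho}^{\gamma_{0}}(x)$, and one can translate the estimate from a given $\gamma_{0}$ to any other $\gamma_{1}$ by polynomial manipulations, absorbing the resulting powers of $\rho(x)$ and $\rho(y)$ into the factor $(\rho(x)+\rho(y))^{\gamma_{1}-\sigma}$.

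The crux is (1) $\iff$ (2). For (1) $\Rightarrow$ (2), homogeneity of $\rho$ promotes $\sigma_{\rho}(\epsilon) \gtrsim \epsilon^{\sigma}$ to a Clarkson-type inequality
\begin{equation*}
\rho(x)^{\sigma}+\rho(y)^{\sigma} - 2\rho\!\left(\tfrac{x+y}{2}\right)^{\sigma} \gtrsim \rho(x-y)^{\sigma}.
\end{equation*}
Using the convexity or concavity of $t \mapsto t^{\gamma/\sigma}$, depending on the sign of $\gamma-\sigma$, one upgrades this to
\begin{equation*}
\tfrac{1}{2}\rho(x)^{\gamma}+\tfrac{1}{2}\rho(y)^{\gamma} - \rho\!\left(\tfrac{x+y}{2}\right)^{\gamma} \gtrsim (\rho(x)+\rho(y))^{\gamma-\sigma}\rho(x-y)^{\sigma},
\end{equation*}
which expresses the $\sigma$-uniform convexity of $x \mapsto \rho(x)^{\gamma}/\gamma$. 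Since this functional has $j_{\rho}^{\gamma}$ as subdifferential, the subgradient inequality applied at both $x$ and $y$ and summed yields (2). For (2) $\Rightarrow$ (1), I would test (2) on vectors $x,y$ with $\rho(x)=\rho(y)=1$ and $\rho(x-y)\geq \epsilon$. Combined with
\begin{equation*}
\rho(x)^{\gamma}-\rho(y)^{\gamma} = \gamma\int_{0}^{1} \langle j_{\rho}^{\gamma}(y+t(x-y)), x-y\rangle\, dt
\end{equation*}
applied on $[0,\tfrac12]$ and $[\tfrac12,1]$, the lower bound $\langle j_{\rho}^{\gamma}(x)-j_{\rho}^{\gamma}(y), x-y\rangle \gtrsim \epsilon^{\sigma}$ yields $1 - \rho((x+y)/2)^{\gamma} \gtrsim \epsilon^{\sigma}$, and the elementary inequality $1-t \geq (1-t^{\gamma})/\gamma$ for $t\in[0,1]$ converts this into the desired bound $\sigma_{\rho}(\epsilon) \gtrsim \epsilon^{\sigma}$.

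The principal obstacle is the polynomial lifting from the $\sigma$-power Clarkson inequality to the $\gamma$-power uniform convexity of $\rho^{\gamma}/\gamma$. The elementary inequalities needed to interpolate between the two exponents while producing the correct factor $(\rho(x)+\rho(y))^{\gamma-\sigma}$ on the right-hand side constitute the main technical content; keeping the estimates symmetric in $x$ and $y$ (so that summing the two subgradient inequalities yields a clean constant) and ensuring the argument is valid in both regimes $\gamma \leq \sigma$ and $\gamma \geq \sigma$ requires the most care, and this is precisely where the detailed analysis of \cite{xu1991characteristic} is invoked.
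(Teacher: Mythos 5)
The paper does not supply a proof of this theorem; it is introduced as a restatement of the main results of \cite{xu1991characteristic} specialized to $\sigma$-convex, $\tau$-smooth Banach spaces, and no argument is given. There is therefore nothing in the paper to compare your outline against, and your attempt has to be judged on its own terms. Its broad shape---deriving the chain $(1)\Leftrightarrow(2)\Leftrightarrow(3)$ from a Clarkson inequality, the $\sigma$-uniform convexity of $x\mapsto\rho(x)^{\gamma}/\gamma$, and the subgradient inequality; then passing to duals for the smoothness chain---does follow the Xu--Roach blueprint.

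The opening duality reduction, however, has a genuine gap. You claim that \eqref{e:tausmooth} for $\rho$ is \emph{equivalent}, under the substitution $u=j_{\rho}^{\gamma}(x)$, $v=j_{\rho}^{\gamma}(y)$, to \eqref{e:sigmaconvex} for $\rho_{\ast}$ with parameters $(\gamma^{\prime},\tau^{\prime})$, and conclude that ``it therefore suffices to prove the chain $(1)\Leftrightarrow(2)\Leftrightarrow(3)$.'' The only connective you invoke to cross between the two is H\"older/Fenchel, $\langle u-v,x-y\rangle\leq\rho_{\ast}(u-v)\,\rho(x-y)$, and this runs one way: it converts the coercivity estimate \eqref{e:sigmaconvex} for $\rho_{\ast}$ into the continuity estimate \eqref{e:tausmooth} for $\rho$, i.e.\ it proves $(2)_{\rho_{\ast}}\Rightarrow(b)$. ``Reverse substitution'' does not recover $(2)_{\rho_{\ast}}$: after the change of variables, $(b)$ becomes a \emph{lower} bound for $\rho\bigl(j_{\rho_{\ast}}^{\gamma^{\prime}}(u)-j_{\rho_{\ast}}^{\gamma^{\prime}}(v)\bigr)$ in terms of $\rho_{\ast}(u-v)$---a non-degeneracy estimate for the inverse duality map, which is a different type of statement from the monotonicity estimate $(2)$. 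Your plan therefore establishes $(a)\Rightarrow(b)\Rightarrow(c)$ for the smoothness chain but leaves $(c)\Rightarrow(a)$ open, and this implication needs its own argument (for instance: write $\rho(x+y)+\rho(x-y)-2=\int_{0}^{1}\langle D\rho(x+ty)-D\rho(x-ty),y\rangle\,\mathrm{d}t$ with $\rho(x)=1$, $\rho(y)\leq\epsilon$, bound the integrand directly from \eqref{e:tausmooth}, and deduce $\tau_{\rho}(\epsilon)\lesssim\epsilon^{\tau}$). That extra implication is part of what \cite{xu1991characteristic} actually establishes and cannot be absorbed into the duality substitution as you state it.
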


\begin{remark}
In light of Proposition \ref{p:smoothconvexduality} and Theorem \ref{t:xuroach}, we note the following dual characterization of $\sigma$-convexity and $\tau$-smoothness of $(\X,\rho)$. Namely, $\rho$ is $\tau$-smooth if and only if for every $\gamma >1$
\begin{equation} \label{e:tauprimeconvex}
\left( \rho_{\ast}(x^{\ast}) + \rho_{\ast}(y^{\ast}) \right)^{\gamma^{\prime} - \tau^{\prime}} \rho_{\ast} \left( x^{\ast}-y^{\ast}\right)^{\tau^{\prime}} \lesssim_{\gamma} \Langle j_{\rho_{\ast}}^{\gamma^{\prime}}(x^{\ast}) - j_{\rho_{\ast}}^{\gamma^{\prime}}(y^{\ast}), x^{\ast} - y^{\ast} \Rangle \qquad \forall x^{\ast}, y^{\ast} \in \X^{\ast},
\end{equation}
while $\rho$ is $\sigma$-smooth if and only if for every $\gamma > 1$,
\begin{equation} \label{e:sigmaprimesmooth}
\rho \left( j_{\rho_{\ast}}^{\gamma^{\prime}}(x^{\ast}) - j_{\rho^{\ast}}^{\gamma^{\prime}}(y^{\ast}) \right) \lesssim \left( \rho_{\ast}(x^{\ast}) + \rho_{\ast}(y^{\ast}) \right)^{\gamma^\prime-\sigma^{\prime}} \rho_{\ast} \left( x^{\ast} - y^{\ast} \right)^{\sigma^{\prime} -1}.
\end{equation}
\end{remark}

We now demonstrate a straightforward relationship between $2$-smoothness and $2$-convexity and more standard notions of smoothness and ellipticity in the calculus of variations.
\begin{lemma}\label{lem:C2} Suppose $F\in C^{2}(\R^m\otimes \R^n \setminus \{0\},\R)$ is a norm on $\R^m\otimes \R^n$. Then $(\R^m\otimes \R^n,F)$ is $2$-smooth. On the other hand, if $\rho \in C^{1}(\R^{m} \otimes \R^{n} \setminus \{0\}, \R)$ is a norm and $F = \rho^{2}$ is uniformly elliptic in the sense that
\begin{equation} \label{e:unifelliptic}
\langle \nabla F(x) - \nabla F(y) , x-y \rangle \gtrsim \lvert x-y\rvert^{2},
\end{equation}
then $\rho$ is $2$-convex.
\end{lemma}
\begin{proof}
First suppose $F \in C^{2}(\R^{m} \otimes \R^{n} \setminus \{0\}, \R)$ is a norm. Let $x\in\{F=1\}$ and $F(y) < 1/2$. Then $F(x-y), F(x+y) \in [1/2,3/2]$. From the regularity of $F$ and the fundamental theorem of calculus,
\begin{align*}
F(x+y)+F(x-y)-2 =& F(x+y)-F(x)+F(x-y)-F(x)\\
=& \int_0^1 \langle\mathrm{D}  F(x+t y)-\mathrm{D} F(x-t y), y\rangle\mathrm{d}t\\
= & 2\int_0^1\int_0^1 \langle \mathrm{D}^2 F(x-ty+2s y)y, y\rangle s\mathrm{d}s\mathrm{d}t\\
\lesssim& \| D^{2}F \|_{L^{\infty}(B_{3/2} \setminus B_{1/2})} F(y)^{2}.
\end{align*}
The last inequality holds due to equivalence of norms on $\R^m\otimes \R^n$.

On the other hand, when $F(y) \ge 1/2$,
$$
F(x+y) + F(x-y) - 2 \le F(x) + F(y) + F(x)+ F(y) - 2 = 2 F(y) \le 4 F(y)^{2}.
$$
In particular, for all $\epsilon \in [0, \infty)$, $\tau_{F}(\epsilon) \lesssim (1 + \|D^{2} F \|_{L^{\infty}(B_{3/2} \setminus B_{1/2})}) \epsilon^{2}$ verifying $F$ is $2$-smooth.

Now suppose $\rho \in C^{1}(\R^{m} \otimes \R^{n} \setminus \{0\}, \R)$, $F = \rho^{2}$, and \eqref{e:unifelliptic} holds. Then, the $2$-convexity of $\rho$ follows from Theorem \ref{t:xuroach}(3). Indeed, \eqref{e:unifelliptic} is precisely \eqref{e:sigmaconvex} when $\gamma=\sigma=2$.
\end{proof}

We next study the convexity and smoothness properties of the spaces $(\X \otimes \R^{n}, F)$, when $F$ is defined via \eqref{e:defF}.

\begin{proposition} \label{p:norms}
Let $(\X, \rho)$ be a Banach space and define $F : \X \otimes \R^{n} \to \R$ by \eqref{e:defF}.

Then
\begin{enumerate}
\item[(i)] $(\X \otimes \R^{n}, F)$ is a Banach space and $(\X^{\ast} \otimes \R^{n}, F_{\ast})$ is its dual. Here we have defined ${F_{\ast}(x^{\ast}) = \left( \sum_{i} \rho_{\ast}(x^{\ast}_{i})^{p^\prime} \right)^{1/p^\prime}}$. 
\item[(ii)] $(\X \otimes \R^{n}, F)$ is reflexive if and only if $(\X,\rho)$ is reflexive.
\item[(iii)] If $\rho$ is $\tau$-smooth then $F$ is $\min(p,\tau$)-smooth.
\item[(iv)] If $\rho$ is $\sigma$-convex then $F$ is $\max(p,\sigma$)-convex.
\item[(v)] $F(x \otimes \xi) = \rho(x) \| \xi \|_{\ell^p}$.
\end{enumerate}
\end{proposition}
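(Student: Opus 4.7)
The plan is to identify $(\X \otimes \R^n, F)$ with the $\ell^2$-direct sum of $n$ copies of $(\X, \rho)$ via the correspondence $\sum_i x_i \otimes e_i \leftrightarrow (x_1, \ldots, x_n)$, after which all five assertions reduce to standard facts about $\ell^q$-sums of Banach spaces.

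For (i), the norm axioms for $F$ follow by applying the triangle inequality for $\rho$ componentwise and then for $\|\cdot\|_{\ell^2}$; absolute homogeneity and positivity are immediate. Completeness of $F$ reduces to completeness of $\X$ in each component. The dual identification follows from
\begin{equation*}
\big|\langle u, v^\ast \rangle\big| = \Big|\sum_{i=1}^{n} \langle x_i, x_i^\ast \rangle_\X\Big| \leq \sum_{i=1}^n \rho(x_i)\rho_\ast(x_i^\ast) \leq F(u) F_\ast(v^\ast),
\end{equation*}
via Cauchy--Schwarz, with sharpness obtained by choosing appropriately rescaled $\epsilon$-norming functionals in each component. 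Part (v) is a direct calculation: $x \otimes \xi$ corresponds to $(\xi_1 x, \ldots, \xi_n x)$, so $F(x\otimes \xi) = (\sum_i \xi_i^2 \rho(x)^2)^{1/2} = \rho(x)|\xi|$. For (ii), iterating the dual identification in (i) gives $(\X \otimes \R^n)^{\ast\ast} = \X^{\ast\ast} \otimes \R^n$, so reflexivity of $\X$ passes to $\X \otimes \R^n$; conversely, $x \mapsto x \otimes e_1$ is an isometric embedding of $\X$ into $\X \otimes \R^n$, and closed subspaces of reflexive spaces are reflexive.

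The substantive content lies in (iii) and (iv). Via Theorem \ref{t:xuroach}, it suffices to verify inequality \eqref{e:tausmooth} for $F$ from the corresponding inequality for $\rho$, and analogously to verify \eqref{e:sigmaconvex}. The useful observation is that the duality map of $F$ decouples over components: using $j_\rho^2(x) = \rho(x)\partial\rho(x)$ and the chain rule applied to $F(u) = \big(\sum_i \rho(x_i)^2\big)^{1/2}$,
\begin{equation*}
j_F^\gamma(x_1, \ldots, x_n) = F(u)^{\gamma-2}\big(j_\rho^2(x_1), \ldots, j_\rho^2(x_n)\big).
\end{equation*}
Substituting into $F_\ast\big(j_F^\gamma(u) - j_F^\gamma(v)\big)$, applying the $\X$-inequality \eqref{e:tausmooth} componentwise, and recombining via Hölder's inequality in $\R^n$ yields \eqref{e:tausmooth} for $F$. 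The argument for \eqref{e:sigmaconvex} is analogous but pairs with $u - v$ and uses Hölder with the conjugate exponents. Because $\ell^2$ is itself $2$-smooth and $2$-convex and the hypotheses impose $\tau \leq 2 \leq \sigma$, the outer $\ell^2$ structure does not obstruct this transfer.

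The main technical obstacle is the exponent bookkeeping: the prefactors $F(u)^{\gamma-2}$ and $F(v)^{\gamma-2}$ must be redistributed so as to recover the correct weights $(F(u)+F(v))^{\gamma-\tau}$ and $(F(u)+F(v))^{\gamma-\sigma}$ on the right-hand sides. This in turn requires controlling the cross term coming from $F(u)^{\gamma-2} - F(v)^{\gamma-2}$, which can be done via the scalar counterpart of the same inequality applied to $t \mapsto t^{\gamma-2}$ together with the already-established componentwise bounds. The final constants depend only on $\gamma, \tau, \sigma$ and on the implicit constants in \eqref{e:tausmooth} and \eqref{e:sigmaconvex} for $\rho$.
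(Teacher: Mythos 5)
Your treatment of (i), (ii) and (v) is fine and follows the same route the paper takes (albeit in more detail). The issue is in (iii) and (iv).

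Your formula
$$
j_F^\gamma(x_1,\dots,x_n)=F(\vec{x})^{\gamma-2}\bigl(j_\rho^2(x_1),\dots,j_\rho^2(x_n)\bigr)
$$
is correct for the $\ell^2$-aggregated norm \eqref{e:defF} (and in fact more accurate than the paper's displayed identity \eqref{e:orthotropic}, which only holds as written when $\gamma=2$). But keeping the prefactor $F(\vec{x})^{\gamma-2}$ is exactly what creates the ``cross term'' you describe: $j_F^\gamma(u)-j_F^\gamma(v)$ does \emph{not} split as $F^{\gamma-2}$ times a componentwise difference, so you cannot simply apply \eqref{e:tausmooth} component by component and then H\"older. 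You acknowledge this (``the main technical obstacle is the exponent bookkeeping\dots which can be done via\dots'') but you never actually carry out the estimate, and it is the entire substance of parts (iii)--(iv). Writing $F(u)^{\gamma-2}j_\rho^2(x_i)-F(v)^{\gamma-2}j_\rho^2(y_i)=F(u)^{\gamma-2}\Delta j_\rho^2 + (F(u)^{\gamma-2}-F(v)^{\gamma-2})j_\rho^2(y_i)$ and bounding both pieces by $(F(u)+F(v))^{\gamma-\tau}F(u-v)^{\tau-1}$ does work, but it requires several non-obvious steps (a mean-value bound for $t\mapsto t^{\gamma-2}$, then trading $F(u-v)^{2-\tau}\le (F(u)+F(v))^{2-\tau}$, and it breaks down for $\gamma<2$), none of which you supply. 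As written, this is a genuine gap.

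What you missed is a simplification that dissolves the difficulty entirely: Theorem \ref{t:xuroach} says that $F$ is $\tau$-smooth as soon as \eqref{e:tausmooth} holds for \emph{some} $\gamma>1$. Choosing $\gamma=2$ (allowed because the hypotheses force $\tau\le 2$), the prefactor $F^{\gamma-2}$ disappears and $j_F^2(\vec{x})=(j_\rho^2(x_1),\dots,j_\rho^2(x_n))$ is genuinely orthotropic, whereupon your componentwise-then-$\ell^2$ strategy goes through cleanly (one Jensen step at the end to replace $\sum\rho(x_i-y_i)^{2(\tau-1)}$ by $F(\vec{x}-\vec{y})^{2(\tau-1)}$). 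This is what the paper does implicitly. For (iv), rather than repeating the computation, the paper deduces it by pure duality: $\rho$ $\sigma$-convex $\iff$ $\rho_\ast$ $\sigma'$-smooth, so by (iii) $F_\ast$ is $\sigma'$-smooth, hence $F$ is $\sigma$-convex by Proposition \ref{p:smoothconvexduality}; you should adopt this, since your ``analogous'' sketch for \eqref{e:sigmaconvex} inherits the same unresolved bookkeeping.
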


\begin{proof}
(i) follows from the linearity of the tensor product and (ii) follows from (i). 

We next check (iii). Let $x=(x_1,\ldots,x_n)\in \X\otimes \R^n$ and $y=(y_1,\ldots,y_n)\in \X\otimes \R^n$. Note by direct calculation that
\begin{align*}
j_F^p(x) = (j_\rho^p(x_1),\ldots,j_\rho^p(x_n)).
\end{align*}
Hence, by (i),
\begin{align*}
F_{\ast} \left( j_{F}^{p}(x) - j_{F}^{p}(y) \right) = \left(\sum_{i} \rho_{\ast} \left( j_{\rho}^{p}(x_{i}) - j_{\rho}^{p}(y_{i}) \right)^{p^{\prime}} \right)^{\frac{1}{p^{\prime}}}.
\end{align*}
Since $\rho$ is $\tau$-smooth, Theorem \ref{t:xuroach} then implies,
\begin{align} \label{e:11}
F_{\ast} \left( j_{F}^{p}(x) - j_{F}^{p}(y) \right) \le \left( \sum_{i} \left( \left( \rho(x_{i}) + \rho(y_{i}) \right)^{p-\tau} \rho(x_{i} -y_{i})^{\tau-1}\right)^{p^{\prime}} \right)^{\frac{1}{p^{\prime}}}.
\end{align}
When $p \le \tau$, the triangle inequality says $(\rho(x_{i}) + \rho(y_{i}))^{p-\tau} \le \rho(x_{i}-y_{i})^{p-\tau}$. So \eqref{e:11} implies
$$
F_{\ast} \left(j_{F}^{p}(x) - j_{F}^{p}(y) \right) \le \left( \sum_{i} \rho(x_{i}-y_{i})^{p} \right)^{\frac{p-1}{p}} = F(x-y)^{p-1}.
$$
By Theorem \ref{t:xuroach}(c), this implies $F$ is $p$-smooth when $p = \min (p,\tau)$.  On the other hand, when $\tau \le p$, it follows from \eqref{e:11}, monotonicity of $t \mapsto t^{p-\tau}$, and Jensen's inequality
\begin{align*}
F_{\ast} \left( j_{F}^{p}(x) - j_{F}^{p}(y) \right)  & \le \left( F(x) + F(y) \right)^{p-\tau} \left( \sum_{i} \rho(x_{i} - y_{i})^{p \left(\frac{\tau-1}{p-1} \right)} \right)^{\frac{p-1}{p}} \\
& \lesssim \left( F(x) + F(y) \right)^{p-\tau} \left( \sum_{i} \rho(x_{i}-y_{i})^{p} \right)^{\frac{\tau-1}{p}}  \\
& = (F(x) + F(y))^{p-\tau} F(x-y)^{\tau-1}.
\end{align*}
Now, Theorem \ref{t:xuroach}(c) verifies $\tau$-smoothness of $F$ when $\tau = \min(p,\tau)$.  This verifies (iii). 

To prove (iv), note that by Proposition \ref{p:smoothconvexduality}, $\rho$ is $\sigma$-convex if and only if $\rho_{\ast}$ is $\sigma^{\prime}$-smooth. Thus by part (i) and (iii) of this Proposition, $F_{\ast}$ is $\min(p^\prime,\sigma^{\prime})$-smooth. Since $p^{\prime} \le \sigma^{\prime} \iff \sigma \le p$, another application of Proposition \ref{p:smoothconvexduality} implies $(F_{\ast})_{\ast} = F$ is $\max(p,\sigma)$-convex as desired.

Finally (v) is a straightforward computation, since $x \otimes \xi = ( \xi_{1} x,  \dots, \xi_{n} x)$.
\end{proof}

\section{A \texorpdfstring{$V$}{}-functional estimate in Banach spaces}\label{s:VFunc}
In this section, we prove Lemma \ref{lem:VFunc}. In the case when $\X=\R^m$ and $\rho$ is the Euclidean norm this result is well-known. However, even in this set-up our elementary proof is new. For the convenience of the reader, we restate the lemma here. Recall the definition of $V_{p,q}$ from \eqref{e:vfuncdef}.

\begin{lemma} \label{l:VFuncEquiv}
Let $(\Y, F)$ be a Banach space and fix $p,q > 1$. Then, for any $\xi,\,\eta\in \Y$,
\begin{equation} \label{e:VFuncEquiv}
F \left( V_{p,q}(\xi) - V_{p,q}(\eta) \right) \sim \left( F(\xi) + F(\eta) \right)^{\frac{p-q}{q}} F(\xi - \eta).
\end{equation}
\end{lemma}

\begin{proof}
{\bf Case 1:} $p-q \geq 0$.

For $\xi, \eta \in \Y$, convexity of $F$ implies the difference quotient
$$
t \mapsto \frac{ F(\xi + t \eta) - F(\xi)}{t}
$$
is a non-decreasing function for $t \in [0, \infty)$. Suppose without loss of generality, $F(\xi) \ge F(\eta)$ and define $\kappa = \frac{F(\eta)}{F(\xi)} \le 1$. Since $p-q \geq 0$, $\kappa^{\frac{p-q}{q}} \le 1$. Using $F(\xi) \kappa = F(\eta)$ and convexity of $F$, it follows
\begin{align*}
F \left( V_{p,q}(\xi) - V_{p,q}(\eta) \right) & = F(\xi)^{\frac{p-q}{q}} \kappa^{\frac{p-q}{q}} \frac{F\left(\xi - \kappa^{\frac{p-q}{q}} \eta \right) - F(\xi)}{\kappa^{\frac{p-q}{q}}} + F(\xi)^{\frac{p}{q}} \\
& \le F(\eta)^{\frac{p-q}{q}} \left[ F(\xi - \eta) - F(\xi) \right] + F(\xi)^{\frac{p}{q}} \\
& = F(\eta)^{\frac{p-q}{q}} F(\xi - \eta) + \left( F(\xi)^{\frac{p-q}{q}} - F(\eta)^{\frac{p-q}{q}} \right) F(\xi) \\
& \lesssim F(\eta)^{\frac{p-q}{q}}F(\xi -\eta) + F(\xi)^{\frac{p-q}{q}}F(\xi - \eta),
\end{align*}
where the last line follows by the mean-value theorem applied to $t \mapsto t^{\frac{p-q}{q}}$ on $[F(\eta), F(\xi)]$. This verifies 
$$
F(V_{p,q}(\xi) - V_{p,q}(\eta)) \le \left( F(\xi)^{\frac{p-q}{q}} + F(\eta)^{\frac{p-q}{q}} \right) F(\xi - \eta)
$$ 
which suffices since for $a,b > 0$ and any $\alpha > 0$, $a^{\alpha} + b^{\alpha} \le 2(a+b)^{\alpha}$.
To confirm the reverse inequality, assume again without loss of generality, that $F(\xi) \geq F(\eta)$.
Then, using the reverse triangle inequality,
\begin{align*}
F\left(V_{p,q}(\xi) - V_{p,q}(\eta) \right) & = F \left( F(\xi)^{\frac{p-q}{q}} (\xi - \eta) - \left( F(\eta)^{\frac{p-q}{q}} - F(\xi)^{\frac{p-q}{q}} \right) \eta \right) \\
& \ge F(\eta)^{\frac{p-q}{q}} F(\xi - \eta) - \left( F(\eta)^{\frac{p-q}{q}} - F(\xi)^{\frac{p-q}{q}} \right) F(\eta) \\
& \ge F(\eta)^{\frac{p-q}{q}} F(\xi-\eta) \\
& \gtrsim (F(\eta) + F(\xi))^{\frac{p-q}{q}} F(\xi - \eta).
\end{align*}
This is the desired inequality.

{\bf Case 2:} $p-q < 0$. 
Suppose $F(\xi) \ge F(\eta)$ and $F(\xi) \kappa = F(\eta)$. Then, $\kappa^{\frac{q-p}{q}} \le 1$. Akin to the previous case, we estimate,
\begin{align*}
F \left( V_{p,q}(\xi) - V_{p,q}(\eta) \right) &= F(\eta)^{\frac{p-q}{q}} \kappa^{\frac{q-p}{q}} \frac{ F( \kappa^{\frac{q-p}{q}} \xi - \eta) - F(\eta) }{\kappa^{\frac{q-p}{q}}} + F(\eta)^{\frac{p}{q}} \\
& \le F(\xi)^{\frac{p-q}{q}} \left[ F(\xi - \eta) - F(\eta) \right] + F(\eta)^{\frac{p}{q}} \\
& = F(\xi)^{\frac{p-q}{q}} F(\xi-\eta) + F(\eta)^{\frac{p}{q}} -F(\eta) F(\xi)^{\frac{p-q}{q}}.
\end{align*}
Since $F(\xi) \ge F(\eta)$,
$$
F(\eta)^{\frac{p}{q}} - F(\eta) F(\xi)^{\frac{p-q}{q}} \le F(\xi)^{\frac{p-q}{q}} \lvert F(\xi) - F(\eta) \rvert \le F(\xi)^{\frac{p-q}{q}} F(\xi-\eta).
$$
Recalling $p-q < 0$, combining the two previous estimates with the observation that when $\lvert a\rvert \ge \lvert b\rvert$ and $\alpha < 0$, $\lvert a\rvert^{\alpha}\le 2^{-\alpha} \lvert a+b\rvert^{\alpha}$ finishes the first inequality. We now show the opposite. Assume without loss of generality $F(\xi) > F(\eta) > 0$, noting that if $F(\eta)=0$ or $F(\xi)=F(\eta)$, there is nothing to prove. Then, using the reverse triangle inequality,
\begin{align*}
 F(V_{p,q}(\eta)-V_{p,q}(\xi)) 
& = F \left( F(\eta)^{\frac{p-q}{q}}(\eta - \xi) - \left(F(\xi)^{\frac{p-q}{q}} - F(\eta)^{\frac{p-q}{q}} \right) \xi \right) \\
& \ge F(\eta)^{\frac{p-q}{q}}F(\eta-\xi) - \left( F(\xi)^{\frac{p-q}{q}} - F(\eta)^{\frac{p-q}{q}} \right) F(\xi) \\
& \ge F(\eta)^{\frac{p-q}{q}} F(\eta- \xi) \\
& \gtrsim \left(F(\eta) + F(\xi) \right)^{\frac{p-q}{q}} F(\eta-\xi).
\end{align*}
Since $F( \zeta) = F(- \zeta)$ for all $\zeta \in \Y$, this is the desired inequality.
\end{proof}

We explicitly state a consequence of Lemma \ref{l:VFuncEquiv} in our set-up.
\begin{corollary} \label{c:VFuncEquiv} Let $(\Y,F)$ be a Banach space. 
Suppose $u \in \Ww^{1,p}_{\loc}(\Omega,\Y)$. Then for all $p,q\geq 0$,
\begin{align} \label{e:0}
F& \left( \Delta_{h} V_{p,q} (\nabla u) \right) \sim (F(\nabla u) + F(\nabla u_{h}))^{\frac{p-q}{q}} F(\Delta_{h} \nabla u) .
\end{align}

In particular, if $p \ge q$,
\begin{equation} \label{e:1}
F(\Delta_{h} \nabla u)^{p} \lesssim F \left( \Delta_{h} V_{p,q}(\nabla u)  \right)^{q},
\end{equation}
and when $p \le q$, if $\eta \in C_{c}(\Omega)$ and $A = \spt \eta$, $\beta \geq 0$,
\begin{equation} \label{e:2.0}
\int \eta^{\beta}F(\Delta_{h} \nabla u)^{p} \lesssim \left( \int \eta^{\frac{q \beta}{p}} F\left( \Delta_{h} V_{p,q}(\nabla u) \right)^{q} \right)^{\frac{p}{q}} \left( \int_{A} (F(\nabla u) + F(\nabla u_{h}))^{p}\right)^{1 - \frac{p}{q}}
\end{equation}
\end{corollary}

\begin{proof}
Applying Lemma \ref{l:VFuncEquiv} with $\xi= \nabla u$ and $\eta = \nabla u_{h}$ confirms \eqref{e:0}. 

Suppose $p \ge q$. Then $F(\Delta_{h} \nabla u)^{p-q} \le (F(\nabla u) + F(\nabla u_{h}))^{p-q}$. So \eqref{e:0} implies
$$
F \left( \Delta_{h} V_{p,q}(\nabla u) \right)^{q} \sim (F(\nabla u) + F(\nabla u_{h}))^{p-q} F(\Delta_{h} \nabla u)^{q} \ge F(\Delta_{h}u)^{p},
$$
confirming \eqref{e:1}. 

Suppose $p \le q$. $\eta \in C_{c}(\Omega)$ and $A = \spt \eta$. Choosing $\tau= \frac{p-q}{q}$, \eqref{e:0} implies
$$
F(\Delta_{h} \nabla u)^{p} \sim F \left( \Delta_{h}V_{p,q}(\nabla u) \right)^{p} \left( F(\nabla u) + F(\nabla u_{h}) \right)^{\frac{p(q-p)}{q}}.
$$ 
Now \eqref{e:2.0} follows by applying Hölder's inequality with exponents $\frac{q}{p}$ and $\frac{q}{q-p}$ to
\begin{align*}
\int \eta^{\beta} F( \Delta_{h} \nabla u)^{p} & \lesssim \int \eta^{\beta} F\left(\Delta_{h}  V_{p,q}(\nabla u)  \right)^{p} (F(\nabla u) + F(\nabla u_{h}))^{\frac{p(q-p)}{q}}.
\end{align*}
\end{proof}

\section{Proof of main theorem}\label{s:proof}
We restate the main theorem for the readers convenience.
\begin{theorem} \label{t:main}
Fix $1 < \gamma < \infty, \tau \in (1,2]$, and $\sigma \in [2,\infty)$. Let $(\X, \rho)$ be a Banach space, $\Omega \subset \R^{n}$ a domain, and $F$ a $\sigma$-convex and $\tau$-smooth norm on $\X \otimes \R^{n}$ satisfying for some norm $\| \cdot \|$ on $\R^{n}$
\begin{align}\label{e:assumptionF}
F(\xi_{1}, \dots, \xi_{n} )\lesssim\|( \rho(\xi_{1}),\dots, \rho(\xi_{n})\|,
\end{align}
Set $\alpha = \max ( \min (\gamma^{\prime}, \sigma^{\prime} ) , \min ( \gamma, \tau))$, and $\alpha_{\ast} = \alpha -1$. Assume $f\in \Bb^{\alpha_\ast,\gamma^\prime}_{\infty}(\Omega)$. Suppose $u \in \Ww^{1,\gamma}(\Omega;\X)$ solves \eqref{e:fullgen}. Then
\begin{equation} \label{e:VFuncReg}
V(\Der u) \in \Bb^{\frac{\alpha}{\sigma},\sigma}_{\infty,\loc}(\Omega) \quad \text{and} \quad V^{\ast}(j_F^\gamma(\Der u)) \in \Bb^{\frac{\alpha}{\tau^{\prime}}, \tau^{\prime}}_{\infty,\loc}(\Omega)
\end{equation}
In fact, for any $B_{r} \Subset B_{s} \Subset \Omega$,
\begin{align}\label{e:VFuncNorm}
\|V(\Der u)\|^{\sigma}_{\Bb^{\frac{\alpha}{\sigma},\sigma}_{\infty}(B_{r})}+\|V_{\ast}(j_F^\gamma(\Der u)\|^{\tau^{\prime}}_{\Bb^{\frac{\alpha}{\tau^{\prime}},\tau^{\prime}}_{\infty}(B_{r})} \lesssim_{(s-r)}  \|u\|_{\Ww^{1,\gamma}(B_{s})}^\gamma + \|f\|_{\Bb_{\infty}^{\alpha_{\ast},\gamma^\prime}(B_{s})}^{\gamma^\prime} .
\end{align}
Moreover, 
\begin{equation} \label{e:FuncReg}
\nabla u \in \Bb_{\infty,\loc}^{\frac{\alpha}{\max\{\sigma,\gamma\}},\gamma}(\Omega) \quad \text{and} \quad j_{F}^{\gamma}(\nabla u) \in \Bb_{\infty,\loc}^{\frac{\alpha}{\max\{\tau^{\prime}, \gamma^{\prime}\}},\gamma^{\prime}}(\Omega).
\end{equation}
\end{theorem}

A key tool in the proof is the following Lemma: 

\begin{lemma}\label{lem:deriv}
Let $1\leq p<\infty$.
Suppose $f\in \LL^p_{loc}(\Omega)$. Then for all $\phi\in \WW^{1,p^\prime}_0(\Omega)$ and $h = \lvert h\rvert e$ with $\lvert e\rvert=1$,
\begin{align*}
\int_\Omega \phi\Delta_h f = -\lvert h\rvert \int_\Omega \frac{\p \phi}{\phi e}\left(\int_0^1 f(x+t h )\mathrm{d} t\right).
\end{align*}
\end{lemma}
\begin{proof}
The identity holds for smooth functions, since
$$
\Delta_h f = \frac{\p}{\p e}\int_0^1 f(x+th)\mathrm{d} t.
$$
The result follows by approximation.
\end{proof}

\begin{proof}[Proof of Theorem \ref{t:main}]
Due to equivalence of norm on $\R^n$, we deduce from \eqref{e:assumptionF},
\begin{equation} \label{e:fstruct}
F( z \otimes \xi ) \lesssim \rho(z) \lb \xi\rb \quad \forall z \in \X ~ \forall \xi \in \R^{n}.
\end{equation}
In fact, due to equivalence of norms on $\R^n$,
\begin{equation}
F(z\otimes \xi) \lesssim \|\xi_1\rho(z),\ldots,\xi_n\rho(z)\|\lesssim \rho(z)\lvert \xi \rvert.
\end{equation}

Suppose for some $r<s$, $B_{r}\Subset B_{s}\Subset\Omega$. Let $\eta\in C_c^\infty(\Omega; \R)$ be a smooth non-negative function supported on $r+\frac{s-r} 3$ with $\lvert \Der \eta \rvert \lesssim  (s-r)^{-1}, \lvert \Der^{2} \eta\rvert \lesssim (s-r)^{-2}$ such that $\eta =1 $ on $B_r$. Let $h= \lvert h\rvert e\in \R^n$ with $\lvert e\rvert=1$ and write $u_h(x) = u(x+h)$ for $\lvert h\rvert<\frac{s-r} 3$.

Recall that we denote $\Delta_h u(x) = u(x)-u(x+h)$. Let $\beta>0$ to be determined at a later stage and test \eqref{e:fullgen} against $\Delta_{-h} \left(\eta^\beta\Delta_h u\right)$. Using discrete integration by parts, we find 
\begin{align}\label{e:EL} 
I \defeq \int_{\Omega} \langle \Delta_h j_F^\gamma(\Der u),\Der(\eta^\beta \Delta_h u)\rangle_{\X \otimes \R^{n}} =\int_{\Omega} \eta^{\alpha} \Langle\Delta_h u, \Delta_{h}f\Rangle_\X \eqdef II.
\end{align}
We write
\begin{align*}
I = \int_{\Omega} \langle \Delta_h j_F^\gamma(\Der u), \eta^{\beta} \Delta_h \Der u\rangle_{\X\otimes \R^n}+\beta\int_{\Omega} \eta^{\beta-1}\langle \Delta_h j_F^\gamma(\Der u),\Delta_h u \otimes \Der \eta \rangle \eqdef A_1 + A_2.
\end{align*}

Note that due to \eqref{e:ident},
\begin{align*}
\langle \Delta_h j_F^\gamma(\Der u), \Delta_h \Der u\rangle_{\X\otimes \R^n} = \langle\Delta_h j_F^\gamma(\Der u),\Delta_h j^{\gamma'}_{F_\ast}(j^\gamma_F(\Der u))\rangle_{\X \otimes \R^{n}}.
\end{align*}
Therefore, we bound $A_{1}$ below by \eqref{e:sigmaconvex} and \eqref{e:tauprimeconvex}, as
\begin{align}  \label{e:A1}
\nonumber A_1\gtrsim &\int_{\Omega} \eta^{\beta} \left( F(\Der u) + F(\Der u_{h}) \right)^{\gamma-\sigma} F(\Delta_h \Der u)^{\sigma}  \\
&\quad+\int_{\Omega} \eta^{\beta} \left( F_{\ast}(j_{F}^{\gamma}(\Der u))+ F_{\ast}(j_{F}^{\gamma}(\Der u_{h})) \right)^{\gamma^{\prime}-\tau^{\prime}} F_{\ast}(\Delta_h j_F^\gamma(\Der u))^{\tau^{\prime}}.
\end{align}
It follows from \eqref{e:0} that \eqref{e:A1} is equivalent to
\begin{equation} \label{e:A1vfunc}
A_{1} \gtrsim \int_{\Omega} \eta^{\beta} F\left( \Delta_{h} V(\Der u) \right)^{\sigma} + \int_{\Omega} \eta^{\beta} F_{\ast} \left( \Delta_{h} V^{\ast}(j_F^\gamma(\Der u)) \right)^{\tau^{\prime}}.
\end{equation}
On the other hand, consecutively using Fenchel, Cauchy-Schwarz, \eqref{e:dqchar} and Young's, we obtain
\begin{align}\label{e:IIvfunc}
\nonumber \lvert II\rvert & \lesssim\int_{\Omega} \eta^{\beta} \rho(\Delta_h u) \rho_{\ast}(\Delta_h f)\lesssim \|\rho(\Delta_h u)\|_{\Ll^\gamma(B_{r+1/3(s-r)})}\|\rho_\ast(\Delta_h f)\|_{\Ll^{\gamma^\prime}(B_{r+1/3(s-r)})}\\
& \lesssim \lvert h\rvert^{1 + \alpha_{\ast}} \left(\|u\|_{\Ww^{1,\gamma}(B_{s})}^\gamma+ \|f\|_{\Bb^{\alpha_{\ast},\gamma^{\prime}}_{\infty}(B_{s})}^{\gamma^\prime}\right).
\end{align}

The claimed regularity statement will follow from combining the observations made so far with various estimates on $A_2$: a primal estimate, carried out separately in the regime $\gamma\leq\sigma$ and $\gamma\geq \sigma$ and similar dual estimates in the regime $\gamma^\prime \leq \tau^\prime$ and $\gamma^\prime \geq \tau^\prime$.

{\bf Case 1.1: A primal estimate if $\gamma \le \sigma$.}

Applying Lemma \ref{lem:deriv} with $f = j_{F}^{\gamma}(\nabla u)$ and $\phi = \eta^{\beta-1} \Delta_{h} u \otimes \nabla \eta$, we get
\begin{align*}
\lvert A_{2}\rvert &\lesssim \lvert h\rvert \int_{\Omega} \left \langle \frac{\partial}{\partial e} \left( \eta^{\beta-1} \Delta_{h} u \otimes \nabla \eta \right), \int_{0}^{1} j_{F}^{\gamma}(\nabla u(x+th)) \mathrm{d}t \right \rangle \\
& = \lvert h\rvert \int_{\Omega} \left \langle \Delta_{h}u \otimes \frac{\partial}{\partial e} (\eta^{\beta-1} \nabla \eta) ,  \int_{0}^{1} j_{F}^{\gamma}(\nabla u (x+th)) \mathrm{d}t \right \rangle \\
& \quad + \lvert h\rvert \int_{\Omega}  \left \langle \frac{\partial }{\partial e} (\Delta_{h} u) \otimes (\eta^{\beta-1} \nabla \eta) , \int_{0}^{1} j_{F}^{\gamma}(\nabla u (x+th)) \mathrm{d}t \right \rangle = A_{21} + A_{22} 
\end{align*}
Using $F_{\ast}(j_{F}^{\gamma}(\xi)) = F(\xi)^{\gamma-1}$, \eqref{e:fstruct}, and \eqref{e:dqchar}, implies
\begin{align}
\nonumber \lvert A_{21}\rvert & \lesssim \lvert h\rvert \int_{\Omega} F \left( \Delta_{h} u  \otimes \frac{\partial}{\partial e} \left(\eta^{\beta-1} \nabla \eta \right) \right) F_{\ast} \left( \int_{0}^{1} j_{F}^{\gamma}(\nabla u(x+th) ) \mathrm{d}t \right) \\
\nonumber & \le \lvert h\rvert \int_{\Omega} \left\{ \rho \left( \Delta_{h} u \right) \lvert \frac{\partial}{\partial e} (\eta^{\alpha-1} \nabla \eta) \rvert \int_{0}^{1} F_{\ast} \left( j_{F}^{\gamma}(\nabla u(x+th))\mathrm{d}t \right) \right\} \\
\nonumber & \lesssim \lvert h\rvert \|u\|_{\Ww^{1,\gamma}(B_{s})} \lvert h\rvert \left(\int_{B_{r+ \frac{2}{3}(s-r)}} \int_{0}^{1} F(\nabla u)^{\gamma}  \mathrm{d} t \right)^{\frac{1}{\gamma^{\prime}}} \\
\label{e:a21max}  & \lesssim \lvert h\rvert^{2} \|u\|_{\Ww^{1,\gamma}(B_{s})}  \left( \int_{B_{s}} F(\nabla u)^{\gamma}\right)^{\frac{1}{\gamma^{\prime}}} 
\lesssim\lvert h\rvert^{2} \|u\|_{\Ww^{1,\gamma}(B_{s})}^{\gamma}.
\end{align}

Choosing $\beta= \sigma^{\prime}$ and arguing similarly, using \eqref{e:fstruct} and \eqref{e:assumptionF}
\begin{align} \label{e:a22case0}
\nonumber \lvert A_{22}\rvert & \lesssim \lvert h\rvert \int_{\Omega} F \left( \frac{\partial}{\partial e}(\Delta_{h} u ) \otimes(\eta^{\sigma^{\prime}-1} \nabla \eta) \right) \int_{0}^{1} F_{\ast} \left( j_{F}^{\gamma}(\nabla u(x+th)) \right)\mathrm{d}t \\
\nonumber & \lesssim \lvert h\rvert \int_{\Omega} \rho \left( \Delta_{h} \frac{\partial u}{\partial e} \right) \lvert \eta^{\sigma^{\prime}-1} \nabla \eta\rvert \int_{0}^{1} F(\nabla u(x+th))^{\gamma-1} \mathrm{d}t \\
\nonumber & \lesssim \lvert h\rvert \int_{\Omega} F( \Delta_{h} \Der u ) \eta^{\sigma^{\prime}-1} \int_{0}^{1} F(\nabla u(x+th))^{\gamma-1}\mathrm{d}t \\
& \lesssim \lvert h\rvert \left(  \int_{\Omega} F(\Delta_{h} \nabla u)^{\gamma} \eta^{\gamma(\sigma^{\prime}-1)} \right)^{\frac{1}{\gamma}} \left( \int_{B_{\rho_{2}}} \int_{0}^{1} F(\nabla u(x+th))^{\gamma} \right)^{\frac{1}{\gamma^{\prime}}}.
\end{align}

Using \eqref{e:2.0} with $p = \gamma \le \sigma= q$ we find,
$$
\left( \int_{\Omega} F(\Delta_{h} u)^{\gamma} \eta^{\gamma(\sigma^{\prime}-1)} \right)^{\frac{1}{\gamma}} \lesssim \left( \int \eta^{\sigma^{\prime}} F \left( \Delta_{h} V \right)^{\sigma} \right)^{\frac{1}{\sigma}} \left( \int_{B_{\rho_{2}}} (F(\nabla u) + F(\nabla u_{h}))^{\gamma} \right)^{\frac{\sigma-\gamma}{\sigma \gamma}}.
$$

Plugging this into \eqref{e:a22case0} and applying Young's inequality yields for any $\e>0$,
\begin{align} \label{e:a22max}
\nonumber \lvert A_{22}\rvert & \lesssim \epsilon \int \eta^{\sigma^{\prime}} F \left( \Delta_{h}V \right)^{\sigma}+ C(\epsilon) \lvert h\rvert^{\sigma^{\prime}} \|\nabla u\|_{\Ll^{\gamma}(B_{s})}^{\sigma^{\prime} (\gamma-1)} \| \nabla u \|_{\Ll^{\gamma}(B_{s})}^{\frac{\sigma-\gamma}{\gamma(\sigma-1)}} \\
& = \epsilon \int \eta^{\sigma^{\prime}} F \left( \Delta_{h}V  \right)^{\sigma} + C(\epsilon) \lvert h\rvert^{\sigma^{\prime}} \|\nabla u\|_{\Ll^{\gamma}(B_{s})}^{\gamma}
\end{align}

From the PDE, we have $A_{1} + A_{2} = I \le \lvert II\rvert$. In particular $A_{1} - \lvert A_{22}\rvert  \le \lvert A_{21}\rvert  + \lvert II\rvert$. Estimating each term respectively by \eqref{e:A1}, \eqref{e:a22max}, \eqref{e:a21max}, and \eqref{e:IIvfunc} yields, choosing $\e$ sufficiently small,
\begin{align}\label{e:final1}
& \int_{\Omega} \eta^{\beta} F\left( \Delta_{h} V \right)^{\sigma} + \int_{\Omega} \eta^{\beta} F_{\ast} \left( \Delta_{h} V^{\ast}\right)^{\tau^{\prime}} \nonumber\\
 \lesssim_{\epsilon}& \left(\lvert h\rvert^{\sigma^{\prime}}+\lvert h\rvert^2+\lvert h\rvert^{1+\alpha_\ast}\right) \left(\|u\|_{\WW^{1,\gamma}(B_s)}^\gamma+\|f\|_{\Bb^{\alpha_\ast,\gamma^\prime}_\infty(B_s)}^{\gamma^\prime}\right).
\end{align}

Noting $\alpha_\ast = \sigma^\prime$ and dividing by $\lvert h\rvert^{\sigma^{\prime}}$ implies
$$
  \| V (\Der u)  \|_{ \Bb^{\sigma^{\prime}-1,\sigma}_{\infty}(B_{r})} +  \| V^{\ast}(j_F^\gamma(\Der u)) \|_{\Bb^{\frac{\sigma^{\prime}}{\tau^{\prime}},\tau^{\prime}}_{\infty}(B_{r})}\lesssim_{(s-r)} \|u\|_{\Ww^{1,\gamma}(B_{s})}^\gamma +\|f\|_{\Bb^{\sigma^{\prime}-1,\gamma^{\prime}}_{\infty}(B_{s})}^{\gamma^\prime}.
$$
\textbf{Case 1.2: A primal estimate if $\gamma\geq \sigma$}

We return to \eqref{e:a22case0}, choose $\beta =\gamma \ge \sigma \ge 2$ and estimate using H\"older's inequality and Young's inequality,
\begin{align*}
\nonumber \lvert A_{22}\rvert \lesssim& \lvert h\rvert (s-r)^{-1}\left(\int_\Omega  F(\Delta_h \Der u)^\gamma\eta^\gamma\right)^\frac 1 \gamma\left(\int_{B_{r+\frac{s-r} 3}}\eta^{(\gamma-2)\gamma^{\prime}}\int_0^1 F(\Der u(x+th))^\gamma dt\right)^\frac 1 {\gamma^\prime}\\
\lesssim& \e \int_\Omega \eta^\gamma F(\Delta_h \Der u)^{\gamma}+C(\e)\lvert h\rvert^{\gamma^\prime}\int_{B_s} F(\Der u)^\gamma.
\end{align*}
Applying \eqref{e:1} yields
\begin{equation} \label{e:A22c12}
\lvert A_{22}\rvert \lesssim \epsilon \int_{\Omega} F \left( \Delta_{h} V \right)^{\sigma}  + C(\epsilon) \lvert h\rvert^{\gamma^{\prime}} \int_{B_{s}} F(\nabla u)^{\gamma}.
\end{equation} 
Now estimating each term in $A_{1} - \lvert A_{22}\rvert \le \lvert A_{21}\rvert + \lvert II\rvert$, respectively by \eqref{e:A1}, \eqref{e:A22c12}, \eqref{e:a21max}, and \eqref{e:IIvfunc}, yields
\begin{align}\label{e:final2}
& \int_{\Omega} \eta^{\beta} F\left( \Delta_{h} V \right)^{\sigma} + \int_{\Omega} \eta^{\beta} F_{\ast} \left( \Delta_{h} V^{\ast} \right)^{\tau^{\prime}} \nonumber\\
\lesssim_{\epsilon}& \left(\lvert h\rvert^{\gamma^\prime}+\lvert h\rvert^2+\lvert h\rvert^{1+\alpha_\ast}\right) \left(\|u\|_{\Ww^{1,\gamma}(B_{s})}^\gamma+ \|f\|_{B^{\alpha_{\ast},\gamma^{\prime}}_{\infty}(B_{s})}^{\gamma^\prime}\right).
\end{align}

Noting that $\alpha_\ast + 1 = \gamma^\prime$, dividing by $\lvert h\rvert^{\gamma^{\prime}}$ yields

$$
\| V(\Der u)  \|_{ \Bb^{\frac{\gamma^{\prime}}{\sigma},\sigma}_{\infty}(B_{r})} +  \| V^{\ast}(j_F^\gamma(\Der u))\|_{\Bb^{\frac{\gamma^{\prime}}{\tau^{\prime}},\tau^{\prime}}_{\infty}(B_{r})} \lesssim_{(s-r)} \|u\|_{W^{1,\gamma}(B_{s})}^\gamma+ \|f\|_{B^{\gamma^{\prime}-1,\gamma^{\prime}}_{\infty}(B_{s})}^{\gamma^\prime}.
$$

{\bf Case 2.1.: A dual estimate if $\tau^{\prime} \ge \gamma^{\prime}$.} 

Choose $\beta = \tau$. From Fenchel, Hölder, and \eqref{e:dqchar} it follows
\begin{align} \label{e:c21a2.1}
\nonumber \lvert A_{2}\rvert &= \tau \lvert \int_{\Omega} \eta^{\tau-1}\langle \Delta_h j_F^\gamma(\Der u),\Delta_h u \otimes \Der \eta \rangle \rvert \\
\nonumber & \lesssim \int_{\Omega} \eta^{\tau-1} F(\Delta_{h} u \otimes \nabla \eta) F_{\ast} \left( \Delta_{h} j_{F}^{\gamma}(\nabla u) \right) \\
\nonumber & \lesssim \left( \int_{B_{\rho_{1}}} \rho(\Delta_{h} u)^{\gamma} \lvert\nabla \eta\rvert^{\gamma} \right)^{\frac{1}{\gamma}} \left( \int \eta^{\gamma^{\prime}(\tau-1)} F_{\ast} \left(\Delta_{h} j_{F}^{\gamma}(\nabla u) \right)^{\gamma^{\prime}} \right)^{\frac{1}{\gamma^{\prime}}}  \\
& \le \lvert h\rvert \|u\|_{\Ww^{1,\gamma}(B_{s})} \left( \int \eta^{\gamma^{\prime}(\tau-1)} F_{\ast} \left(\Delta_{h} j_{F}^{\gamma}(\nabla u) \right)^{\gamma^{\prime}} \right)^{\frac{1}{\gamma^{\prime}}}  
\end{align}
Since $V^{\ast}(j_F^\gamma(\Der u)) = V^{\ast}_{\gamma^{\prime},\tau^{\prime}}(j_{F}^{\gamma}(\nabla u))$, up to replacing $F$ with $F_{\ast}$ and $\nabla u$ with $j_{F}^{\gamma}(\nabla u)$, it follows from \eqref{e:2.0} with $p = \gamma^{\prime}$ and $q = \tau^{\prime}$, that
\begin{align} \label{e:c21a2.2}
&\nonumber  \bigg( \int \eta^{\gamma^{\prime}(\tau-1)} F_{\ast} \left(\Delta_{h} j_{F}^{\gamma}(\nabla u) \bigg)^{\gamma^{\prime}} \right)^{\frac{1}{\gamma^{\prime}}} \\
 \le& \left( \int \eta^{\tau} F_{\ast} \left( \Delta_{h} V^{\ast}(j_F^\gamma(\Der u)) \right)^{\tau^{\prime}} \right)^{\frac{1}{\tau^{\prime}}}  \left( \int_{B_{\rho_{2}}} \left( F_{\ast} ( j_{F}^{\gamma}(\nabla u)) + F_{\ast}(j_{F}^{\gamma}(\nabla u)) \right)^{\gamma^{\prime}} \right)^{\frac{\tau^{\prime}-\gamma^{\prime}}{\tau^{\prime} \gamma^{\prime}}}  \nonumber\\
 \lesssim& \left( \int \eta^{\tau}F_{\ast} \left( \Delta_{h} V^\ast(j_F^\gamma(\Der u))\right)^{\tau^{\prime}} \right)^{\frac{1}{\tau^{\prime}}}  \|j_{F}^{\gamma}(\nabla u) \|_{\LL^{\gamma^{\prime}}(B_{s})}^{1 - \frac{ \gamma^{\prime}}{\tau^{\prime}}}.
\end{align}
Recalling $F_{\ast}(j_{F}^{\gamma}(\xi)) = F(\xi)^{\gamma-1}$, we note $\displaystyle 
 \|j_{F}^{\gamma}(\nabla u) \|_{\LL^{\gamma^{\prime}}(B_{s})}^{1 - \frac{ \gamma^{\prime}}{\tau^{\prime}}} = \| \nabla u \|_{\LL^{\gamma}(B_{s})}^{(\gamma-1)(1 - \frac{\gamma^{\prime}}{\tau^{\prime}})} = \| \nabla u\|_{\LL^{\gamma}(B_{s})}^{\frac{\gamma-\tau}{\tau}}$. Then plugging \eqref{e:c21a2.2} into \eqref{e:c21a2.1} and applying Young's inequality yields for any $\e>0$,
\begin{align} \label{e:c21a2}
\nonumber \lvert A_{2}\rvert & \lesssim \epsilon \int \eta^{\tau}F_{\ast} \left( \Delta_{h} V^{\ast}(j_F^\gamma(\Der u)) \right)^{\tau^{\prime}} + C(\epsilon) \lvert h\rvert^{\tau} \|\nabla u\|_{\LL^{\gamma}(B_{s})}^{\gamma-\tau} \|u\|_{\Ww^{1,\gamma}(B_{s})}^{\tau} \\
& \lesssim  \epsilon \int \eta^{\tau}F_{\ast} \left( \Delta_{h} V^{\ast}(j_F^\gamma(\Der u))\right)^{\tau^{\prime}} + C(\epsilon) \lvert h\rvert^{\tau} \|u\|_{\Ww^{1,\gamma}(B_{s})}^{\gamma}
\end{align}

Since the PDE implies $A_{1} \le \lvert A_{2}\rvert + \lvert II\rvert$, choosing $\e$ small enough, and estimating each term consecutively by \eqref{e:A1vfunc},  \eqref{e:c21a2}, and \eqref{e:IIvfunc} yields
\begin{align}\label{e:final3}
& \int_{\Omega} \eta^{\beta} F\left( \Delta_{h} V \right)^{\sigma} + \int_{\Omega} \eta^{\beta} F_{\ast} \left( \Delta_{h} V^{\ast} \right)^{\tau^{\prime}}  \\
\nonumber & \lesssim \left(\lvert h\rvert^{\tau}+\lvert h\rvert^2+\lvert h\rvert^{1+\alpha_\ast}\right)\left( \|u\|_{\Ww^{1,\gamma}(B_{s})}^\gamma+ \|f\|_{B^{\alpha_{\ast},\gamma^\prime}_{\infty}(B_s)}^{\gamma^\prime}\right).
\end{align}

Noting $\alpha_\ast-1 = \tau$, dividing by $\lvert h\rvert^{\tau}$ verifies
$$
 \left \| V(\Der u) \right \|_{ \Bb^{\frac{\tau}{\sigma},\sigma}_{\infty}(B_{r})} + \left \| V^{\ast}(j_F^\gamma(\Der u)) \right\|_{\Bb^{\tau-1,\tau^{\prime}}_{\infty}(B_{r})}  \lesssim_{(s-r)}\|u\|_{\Ww^{1,\gamma}(B_{s})}^\gamma+\|f\|_{B^{\tau-1,\gamma^{\prime}}_{\infty}(B_{s})}^{\gamma^\prime}.
$$

\textbf{Case 2.2.: A dual estimate if $\gamma\leq \tau$.}

We return to estimating $\lvert A_{2}\rvert$. Choosing $\beta = \gamma$ and using H\"older's then Young's inequalities,
\begin{align*}
\lvert A_{2}\rvert \lesssim& \int_\Omega \eta^{\gamma-1} F(\Delta_h u\times \Der \eta)F_\ast(\Delta_h j_F^\gamma(\Der u))\\
\lesssim& \left(\int_{\Omega}F(\Delta_h u\otimes \Der \eta)^\gamma\right)^\frac 1 \gamma\left(\int_\Omega \eta^\gamma F_\ast \left(\Delta_h j_F^\gamma(\Der u) \right)^{\gamma^{\prime}}\right)^\frac 1 {\gamma^\prime}\\
\lesssim& C(\e)\lvert h\rvert^\gamma \|u\|_{\WW^{1,\gamma}(B_{\rho_2})}^\gamma+\e \int_\Omega \eta^\gamma F_\ast \left(\Delta_h j_F^\gamma(\Der u)\right)^{\gamma^\prime}.
\end{align*}
Since $\gamma^{\prime} \ge \tau^{\prime}$, up to replacing $F$ with $F_{\ast}$, $V_{p,q}$ with $V_{\gamma^{\prime},\tau^{\prime}}^{\ast}$, and $u$ with $j_{F}^{\gamma}(\nabla u)$, \eqref{e:1} reads
$$
F{\ast} \left( \Delta_{h} j_{F}^{\gamma}(\nabla u) \right)^{\gamma^{\prime}} \le F_{\ast} \left( \Delta_{h} V^{\ast}(j_F^\gamma(\Der u)) \right)^{\tau^{\prime}}.
$$
Hence,
\begin{equation} \label{e:A2c22}
\lvert A_{2}\rvert \lesssim \epsilon \int_{\Omega} \eta^{\gamma} F_{\ast} \left( \Delta_{h} V^{\ast}(j_F^\gamma(\Der u)) \right)^{\tau^{\prime}} + C(\epsilon) \lvert h\rvert^{\gamma} \|u\|_{\Ww^{1,\gamma}(B_{s})}.
\end{equation}
By writing $A_{1} \le \lvert A_{2}\rvert + \lvert II\rvert$ using respectively \eqref{e:A1vfunc}, \eqref{e:A2c22}, and \eqref{e:IIvfunc}  we obtain
\begin{align}\label{e:final4}
& \int_{\Omega} \eta^{\beta} F\left( \Delta_{h} V(\Der u)\right)^{\sigma} + \int_{\Omega} \eta^{\beta} F_{\ast} \left( \Delta_{h} V^{\ast}(j_F^\gamma(\Der u)) \right)^{\tau^{\prime}} \\
\nonumber & \lesssim \left(\lvert h\rvert^{\gamma}+\lvert h\rvert^2+\lvert h\rvert^{1+\alpha_\ast}\right)\left(\|u\|^{\gamma}_{W^{1,\gamma}(B_{s})} + \|f\|_{B^{\alpha_{\ast},\gamma^\prime}_\infty(B_s)}^{\gamma^\prime}\right).
\end{align}

Noting $\alpha_\ast+1= \gamma$, dividing by $\lvert h\rvert^{\gamma}$ implies
$$
  \| V(\Der u)  \|_{ \Bb^{\frac{\gamma^{\prime}}{\sigma},\sigma}_{\infty}(B_{r})} +  \| V^{\ast}(j_F^\gamma(\Der u)) \|_{\Bb^{\frac{\gamma^{\prime}}{\tau^{\prime}},\tau^{\prime}}_{\infty}(B_{r})} \lesssim_{(s-r)}\|u\|_{\Ww^{1,\gamma}(B_{s})}^\gamma+\|f\|_{B^{\tau,\gamma^{\prime}}_{\infty}(B_{s})}^{\gamma^\prime}.
$$
\textbf{Concluding the proof.}
One confirms \eqref{e:VFuncNorm}, and consequently \eqref{e:VFuncReg} using Case 1.1, 1.2, 2.1, and 2.2, respectively when $\alpha = \sigma^{\prime}, \gamma^{\prime}, \tau,$ and $\gamma$. To deduce \eqref{e:FuncReg} from \eqref{e:VFuncReg}, we apply Corollary \ref{c:VFuncEquiv} with $p=\gamma, q = \sigma$ to obtain
\begin{equation*}
\int_{B_{r}} F\left( \Delta_{h} V(\Der u) \right)^{\sigma} \gtrsim 
\begin{cases}
\int_{B_{r}} F(\Delta_{h} \nabla u)^{\gamma} & \gamma \ge \sigma \\
 \left( \int_{B_{r}} F \left( \Delta_{h} \nabla u \right)^{\gamma} \right)^{\frac{\sigma}{\gamma}} \left( \int_{B_{r}} (F(\nabla u) + F(\nabla u_{h}))^{\gamma} \right)^{1 - \frac{\sigma}{\gamma}} & \gamma \le \sigma.
\end{cases}
\end{equation*}

Similarly with $p = \gamma^{\prime}, q = \tau^{\prime}$, $F = F_{\ast}$ and $u = j_{F}^{\gamma}(\nabla u)$, Corollary \ref{c:VFuncEquiv} says

\begin{equation*}
\int_{B_{r}} F_{\ast} \left( \Delta_{h} V^{\ast}(j_F^\gamma(\Der u) \right)^{\tau^{\prime}} \gtrsim 
\begin{cases}
\int_{B_{r}} F_{\ast}\left(\Delta_{h} j_{F}^{\gamma}(\nabla u) \right)^{\gamma^{\prime}} & \gamma^{\prime} \ge \tau^{\prime} \\
\left( \int_{B_{r}} F_{\ast} \left( \Delta_{h} j_{F}^{\gamma}(\nabla u) \right)^{\gamma^{\prime}} \right)^{\frac{\tau^{\prime}}{\gamma^{\prime}}} \left\| \nabla u  \right\|_{\LL^{\gamma}(B_{s})}^{\frac{\gamma^{\prime}-\tau^{\prime}}{\gamma^{\prime}-1}} & \gamma^{\prime} \le \tau^{\prime}.
\end{cases}
\end{equation*}

Using these estimates to produce further lower bounds for \eqref{e:final1}, \eqref{e:final2}, \eqref{e:final3}, \eqref{e:final4} and then dividing by $\lvert h\rvert^{\alpha}$ verifies \eqref{e:FuncReg}.
\end{proof}

\bibliographystyle{alpha}
\bibdata{references}
\bibliography{references}

\end{document}